\documentclass[12pt]{amsart}
\usepackage{amssymb}

 \newcommand{\Z}{\mathbb Z}\newcommand{\nn}{\mathbb N}

\DeclareMathOperator{\Per}{Per}

\theoremstyle{plain} \newtheorem{thm}{Theorem}
\newtheorem*{thm*}{Theorem}
\newtheorem{cor}[thm]{Corollary} \newtheorem{prop}[thm]{Proposition}
\newtheorem{lemma}[thm]{Lemma}

\theoremstyle{definition} \newtheorem{defn}[thm]{Definition}

\newtheorem{ex}[thm]{Example} 

\theoremstyle{remark} 

\def\category#1{\text{\sc #1}}
\newcommand{\Szym}{\category{Szym}}
\newcommand{\RelF}{\category{Rel}_\mathrm{f}}

\DeclareMathOperator{\End}{End}
\newcommand{\id}{\operatorname{id}}
\newcommand{\poX}{\overline X}
\newcommand{\porder}{R_\ge}
\newcommand{\Rprodxi}{\overline{R}_{\xi}}
\newcommand{\Rcanon}[1]{\tilde{R}_{#1}}
\newcommand{\Rcanonxi}{\Rcanon{\xi}}
\newcommand{\Xcanon}[1]{\tilde{X}_{#1}}
\newcommand{\Xcanonxi}{\Xcanon{\xi}}

\numberwithin{thm}{section}

\begin{document}

\title[Shift Equivalence for Boolean Matrices]{A Complete Invariant for Shift Equivalence for Boolean Matrices and Finite Relations}

\author[E. Akin]{Ethan Akin}
\address{Mathematics Department \\
    The City College \\ 137 Street and Convent Avenue \\
       New York City, NY 10031, USA     }
\email{ethanakin@earthlink.net}

\author[M. Mrozek]{Marian Mrozek}
\address{Division of Computational Mathematics,
  Faculty of Mathematics and Computer Science,
  Jagiellonian University, ul.~St. \L{}ojasiewicza 6, 30-348~Krak\'ow, Poland
}
\email{Marian.Mrozek@uj.edu.pl}

\author[M. Przybylski]{Mateusz Przybylski}
\address{Division of Computational Mathematics,
  Faculty of Mathematics and Computer Science,
  Jagiellonian University, ul.~St. \L{}ojasiewicza 6, 30-348~Krak\'ow, Poland
}
\email{Mateusz.Przybylski@uj.edu.pl}

\author[J. Wiseman]{Jim Wiseman}
\address{Department of Mathematics \\
Agnes Scott College \\ 141 East College Avenue \\ Decatur, GA 30030, USA }
\email{jwiseman@agnesscott.edu}

\begin{abstract}
We give a complete invariant for shift equivalence for Boolean matrices (equivalently finite relations),
in terms of the period, the induced partial order on recurrent components, and the cohomology class of the  relation on those components.
\end{abstract}

\maketitle

\section{Introduction}



Let $X$ be a finite set.  A relation $R$ on $X$ is a subset of $X \times X$ and associated to it is a subshift of finite type,
an important and well-studied dynamical system.  The relation can equivalently be described by a Boolean matrix (a $(0,1)$-matrix for which
matrix addition and multiplication are induced by Boolean algebra, where $1+1=0+1=1$).
For such a relation (or matrix) \emph{shift equivalence} is a natural  equivalence relation with important dynamical significance.
It is strictly weaker than conjugacy, and corresponds roughly to ``eventual conjugacy'' (\cite{LM}).
It  is useful in the classification of such subshifts and it arises in
defining a Conley index for computational approximations of dynamical systems.  Classifications of shift
equivalence are given in \cite{KR1,KR2,MMW}.  Here, we give a complete invariant in terms of the period,
the induced partial order on recurrent components, and the cohomology class of the  relation on those components.

Our main theorem is as follows.  (We define the relevant terms more precisely in
sections \ref{sect:background} and \ref{sect:main}.)  There exists a least integer
$p \ge 1$
such that there exists an integer $N>0$ such that $R^{n+p} = R^n$ for all $n\ge N$; we call $p$ the
\emph{period} of $R$.  Additionally, $R$ induces a partial order $\porder$ on the strongly connected components,
determined by which components map to which.  Finally, a choice of representatives of the strongly connected components
induces a cocycle $\xi: \porder \to \mathcal L_p$, where $\mathcal L_p$ is the
additive monoid of non-empty subsets of $\Z/p\Z$;
we denote the cohomology class of $\xi$ by $[\xi]$.

\begin{thm*}
For a finite relation $R$, the triple $(\porder$, $p$, $[\xi])$ is a complete invariant of shift equivalence.
\end{thm*}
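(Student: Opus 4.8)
The plan is to establish both directions of the equivalence separately: first that shift equivalent relations have the same triple (invariance), then that the triple determines the shift equivalence class (completeness).

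For invariance, I would first recall that shift equivalence between $R$ and $S$ means there exist relations $U, V$ and a lag $\ell$ with $RU = US$, $SV = VR$, $UV = R^\ell$, $VU = S^\ell$. The period $p$ is visibly invariant since it is determined by the eventual behavior of the powers $R^n$ up to the periodicity $R^{n+p} = R^n$, and shift equivalence forces $R^{n+\ell}$ and $S^{n+\ell}$ to have matching eventual dynamics via conjugation by $U, V$. For the partial order $\porder$: the recurrent strongly connected components of $R$ correspond to the "essential" part of the dynamics, and the relations $U, V$ induce a bijection between the recurrent components of $R$ and those of $S$ that respects the induced order — this should follow from tracking how $U$ maps recurrent points of $R$ into recurrent points of $S$ and vice versa, with $UV, VU$ witnessing that the composite is the identity on components. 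The cohomology class $[\xi]$ is the subtlest: I would need to show that the cocycle $\xi: \porder \to \mathcal L_p$ changes only by a coboundary when one passes to $S$ and re-chooses component representatives, and that the conjugating relations $U, V$ implement precisely such a coboundary shift (essentially a relabeling of the "phases" in $\Z/p\Z$ attached to each component).

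For completeness, the strategy is constructive: given the triple $(\porder, p, [\xi])$, build a canonical relation $\Rcanonxi$ realizing it, and show every $R$ with that triple is shift equivalent to the canonical model. The canonical model would be defined on a set $\Xcanonxi$ indexed by the poset $\porder$ together with the cyclic data: roughly, for each component take $\Z/p\Z$ copies (or copies indexed by the subsets appearing in $\xi$), with the cocycle $\xi$ dictating which phases connect across comparable components. Then I would show $R$ is shift equivalent to $\Rcanonxi$ by: (i) reducing $R$ to its recurrent part (the non-recurrent transient states contribute nothing new up to shift equivalence, since shift equivalence kills "wandering" structure — this should be a known reduction, perhaps from \cite{MMW}); (ii) on the recurrent part, decomposing according to the period into $p$-cyclically-permuted pieces; and (iii) matching the cross-component structure using the cocycle, where the freedom to adjust by a coboundary corresponds exactly to the freedom in choosing the conjugating relations $U, V$.

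The main obstacle I anticipate is the cohomological bookkeeping in both directions: precisely defining the cocycle $\xi$ from a choice of representatives, verifying it is well-defined up to coboundary, and — hardest of all — showing that two relations with cohomologous cocycles are actually shift equivalent, which requires explicitly constructing the lag-$\ell$ relations $U, V$ out of the cochain exhibiting the cohomology. This is where the abstract invariant must be converted back into concrete Boolean-matrix data, and getting the composition identities $UV = R^\ell$, $VU = S^\ell$ to hold on the nose (not just eventually) will require care about choosing $\ell$ large enough to absorb all transient behavior. I would expect to isolate this construction as the technical heart of the paper, likely handled by first proving it for the "irreducible" case (single equivalence class under $\porder$, where $\xi$ carries the bulk of the information) and then assembling the general case by induction up the poset $\porder$.
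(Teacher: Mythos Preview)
Your outline is broadly on target --- you correctly identify the two directions, the canonical model $\Rcanonxi$, and the role of the coboundary freedom --- but you are working harder than necessary because you miss the key leverage point the paper exploits. The paper does \emph{not} track shift-equivalence data $(U,V,\ell)$ directly through the argument. Instead it first invokes Theorem~\ref{thm:CanonForm} (from \cite{MMW}): every finite relation is shift equivalent to one in canonical form, and two relations in canonical form are shift equivalent \emph{if and only if they are isomorphic}. This single reduction converts the entire problem from shift equivalence to ordinary isomorphism of finite structures.

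With that reduction in hand, both directions become much shorter than you anticipate. For invariance one only needs to check that an isomorphism $h:(X,R)\to(X',R')$ of canonical relations preserves the period and the poset (immediate) and carries the cocycle to a cohomologous one; the paper does this in Theorem~\ref{thm:cohom} by simply reading off the shift $\theta(a)$ from where the isomorphism sends the chosen basepoint in each component. For completeness one shows that a canonical $(X,R)$ is \emph{isomorphic} --- not merely shift equivalent --- to $(\Xcanonxi,\Rcanonxi)$ via the explicit bijection $(a,t)\mapsto f^t(s(a))$ (Theorem~\ref{thm:CanonIso}). Consequently the ``main obstacle'' you flag --- building explicit lag-$\ell$ relations $U,V$ with $UV=R^\ell$ and $VU=S^\ell$ on the nose, treating the irreducible case first, and inducting up the poset --- never arises: the conjugating map is a genuine bijection with $\ell=0$, and no poset induction is needed. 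Your route might be made to work, but the paper's detour through canonical form is what makes the argument clean.
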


\section{Background and definitions} \label{sect:background}

Let $X$ and $Y$ be sets.  A \emph{relation} 
from $X$ to $Y$
is a subset $R\subseteq X \times Y$.  If $X=Y$ we call $R$ a
\emph{relation on} $X$, and denote the pair by $(X,R)$.  For $x\in X$ and $A\subseteq X$ we define
$$
R(x) := \{y \in Y : (x,y) \in R\},
$$
$$
R(A) := \bigcup\{R(x): x\in A\}.
$$

Given a relation $R\subseteq X \times Y$, the \emph{reverse relation} $R^{-1}\subseteq Y \times X$ is defined by $R^{-1} := \{ (y,x) : (x,y) \in R \}$.

Given another relation $S\subseteq Y\times Z$  we define the \emph{composition} of $S$ with $R$ as the relation
\[
   S\circ R:=\{(x,z)\in X\times Z: \text{ $(x,y)\in R$ and $(y,z)\in S$ for some $y\in Y$}\}.
\]
As with functions, composition of relations is associative.

The identity relation on $X$ is $\id_X=\{(x,x) : x\in X\}$. For $n \in \Z_+$ the \emph{$n$th power} of a relation $R$ in $X$ is given recursively by
\[
   R^n:=\begin{cases}
           \id_X & \text{ for $n=0$,}\\
           R\circ R^{n-1} & \text{ for $n>0$.}\\
        \end{cases}
\]
From associativity it follows that $R^i \circ R^j=R^{i+j}$ for all $i,j \in \Z_+$, the set of nonnegative integers.

Now let $X$ be a finite set.  We can think of a relation on $X$ in various ways:    as a Boolean matrix, a directed graph,
 a multi-valued map, or  a morphism in an appropriately defined category.  We discuss each briefly.

\subsection*{Boolean matrix}
If $X=\{x_1,\dots,x_n\}$, then we can identify a relation $R$  on $X$ with the $n\times n$ Boolean
matrix $A$ with $i,j$ entry given by 1 if $x_i \in R(x_j)$ and 0 otherwise.  It is easy to check
that $A^n$ (where matrix multiplication is Boolean) is the matrix for the relation $R^n$.  Thus
any statement about finite relations has a corresponding linear algebraic statement about Boolean
matrices.
Furthermore, Boolean matrices describe the structure of nonnegative matrices,
since the map sending positive numbers to 1 and 0 to 0 is a homomorphism.  See \cite{K} for details, and \cite{BP} for applications.

\subsection*{Directed graph}

We can identify a finite relation $(X,R)$ with the directed graph
whose vertices are the elements of $X$ and with an edge from $x$ to $y$ if and only
if $y\in R(x)$.  Thus, there is a path in the graph of length $n$ from $x$ to $y$ if and only if $y\in R^n(x)$.
It is often convenient to think of $(X,R)$ in this way and to use graph-theoretic terminology.
The Boolean matrix corresponds to the adjacency matrix for the directed graph.

\subsection*{Multi-valued map}
We can think of a relation as a multi-valued map, where the image of a
point $x$ is the set $R(x)$.  (In general, the image could be empty, but when
considering shift equivalence, we can assume that it contains at least one
point (\cite{MMW}Proposition 13).  This is equivalent to the condition that
every vertex in the directed graph have at least one outgoing edge, or the
condition that every column of the Boolean matrix have at least one 1.)
From a dynamical systems point of view, the powers $R^n$ tell us about the
long-term behavior of the map under iteration.  In this context, finite relations
can arise from ordinary (single-valued) maps in the form of numerical approximations or reconstructions from sampled data.

\subsection*{The category $\RelF$.}
The objects in the category $\RelF$ are finite sets, and the morphisms
from set $X$ to set $Y$ are
the relations from $X$ to $Y$. Composition of
morphisms is composition of relations, and the identity morphism is the identity relation.
This is the viewpoint we will adopt when we consider the Szymczak functor and shift equivalence below.

\subsection*{The category $\End(\RelF)$.}
This is the category of endomorphisms of $\RelF$.
The objects of $\End(\RelF)$ are pairs $(X,R)$, where $X$ is a finite set
(an object in $\RelF$) and $R$ is a relation on $X$ (a morphism from $X$ to $X$).
A morphism $A$ from $(X,R)$ to $(X',R')$ (written $A:(X,R) \to (X',R')$ ) is a relation $A$
from $X$ to $X'$ such that $R'\circ A= A\circ R$.

\begin{defn}
Two finite relations $(X,R)$ and $(X',R')$ are \emph{shift equivalent}
if there exist morphisms $A : (X,R) \to (X',R')$ and $B : (X',R') \to (X,R)$ such that
for some $\ell \in \Z_+$, $B\circ A = R^{\ell}$ and $A\circ B = (R')^{\ell}$.
\end{defn}

If we think of $R$ and $R'$ as Boolean matrices, then this is
just the usual notion of shift equivalence for Boolean matrices (\cite{KR1}).
Working from $\End(\RelF)$ we define a category for which shift equivalence is the isomorphism concept.


\subsection*{The category $\Szym(\RelF)$.}
We can now define the Szymczak category $\Szym(\RelF)$ (cf.~\cite{S}).
The objects of  $\Szym(\RelF)$ are the objects of $\End(\RelF)$.
Given objects $(X,R)$ and $(X',R')$ in $\Szym(\RelF)$, we consider the equivalence relation
given by $(A_1,m_1)\equiv(A_2,m_2)$, for $A_1,A_2:(X,R) \to (X',R')$ and $m_1,m_2
\in \Z_+$, if and only if there exists a
$k\in \Z_+$ such that $A_1\circ R^{m_2+k} = A_2\circ R^{m_1+k}$, or equivalently $(R')^{m_2+k}\circ A_1 = (R')^{m_1+k}\circ A_2$.
The morphisms in  $\Szym(\RelF)$ from $(X,R)$ to $(X',R')$ are the equivalence classes of $\equiv$.
Composition is given by $[A',m']\circ [A,m] = [A'\circ A, m+m']$ and $[\id_X,0]$ is the identity morphism on $(X,R)$.

For example, for any $(X,R) \in \End(\RelF)$ and $m \in \mathbb{N}$, $R^m : (X,R) \to (X,R)$ is a morphism with $[R^m,m] = [id_X,0]$.

The functor $\Szym:\End(\RelF)\to \Szym(\RelF)$ fixes objects and sends a morphism $A$ to $[A,0]$.
\vspace{.5cm}

As shown in \cite{FR}Proposition 8.1,
$(X,R)$ and $(X',R')$ are shift equivalent if and only if they are isomorphic in the Szymczak category. Notice that if
$[A,m] :(X,R) \to (X',R')$ and $[B,n] : (X',R') \to (X,R)$ are inverse isomorphisms in $\Szym(\RelF)$,
then for some $k, \ BAR^k = R^{m+n+k}$ and $AR^kB = AB(R')^k = (R')^{m+n+k}$. So
$A\circ R^k : (X,R) \to (X',R')$ and $B : (X',R') \to (X,R)$ yield the shift equivalence with $\ell = m+n+k$.


Shift equivalence (Szymczak isomorphism) is an important notion dynamically, when
we consider the long-term behavior of a relation.  Very roughly speaking, it
ignores transient behavior and focuses on the dynamics within and between recurrent components.
The same is true when we consider powers of graphs.  In \cite{KR1,KR2}, Kim and Roush consider shift
equivalence of Boolean matrices in part as a step toward disproving the Williams Conjecture from
symbolic dynamics (\cite{KR3,KR4,KR5}).  They provide the following algebraic classification of Boolean matrices up to shift equivalence.

\begin{thm}[\cite{KR2}Proposition 3.5]
Every matrix  is shift equivalent to a periodic power of itself. Two periodic
matrices are shift equivalent if and only if after deletion of dependent rows and columns, they are conjugate by a permutation matrix.
\end{thm}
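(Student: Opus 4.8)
The plan is to prove the two assertions separately.

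\emph{Every matrix is shift equivalent to a periodic power of itself.} Let $R$ have period $p$, so $R^{n+p}=R^n$ for all $n\ge N$, where we may take $N\ge 1$. Choose an integer $m\ge N$ with $m\equiv 1\pmod p$ and set $S:=R^m$. For every $n\ge 1$ one has $mn\ge N$ and $m(n+p)\equiv mn\pmod p$, hence $S^{n+p}=R^{m(n+p)}=R^{mn}=S^n$; in particular $S^{p+1}=S$, so $S$ is a \emph{periodic} matrix. To see that $(X,R)$ and $(X,S)$ are shift equivalent, take $A=B=R^N$ and $\ell=2N$: the intertwining identity $S\circ R^N=R^N\circ R$ reads $R^{m+N}=R^{N+1}$, which holds since both exponents are $\ge N$ and $m+N\equiv N+1\pmod p$ (the identity $R\circ R^N=R^N\circ S$ is the same one), while $B\circ A=R^{2N}=R^\ell$ and $A\circ B=R^{2N}=R^{2Nm}=S^{\ell}$ because $2N\ge N$ and $2Nm\equiv 2N\pmod p$.

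\emph{The ``if'' direction.} Call a relation \emph{reduced} if no row is a Boolean sum (union) of the remaining rows and no column is a Boolean sum of the remaining columns. Deleting a dependent row or column one at a time --- rerouting the incident edges through its summands --- strictly decreases the size and terminates in a reduced relation, whose surviving rows (resp.\ columns) are, up to relabeling, the join-irreducible elements of the original row space (resp.\ column space). The ingredient I would invoke here is the standard lemma that a single such deletion does not change the shift-equivalence class; concretely, the inclusion of the smaller index set and the rerouting relation in the opposite direction, each corrected by a suitable power of the relation so as to become morphisms in $\End(\RelF)$, represent mutually inverse isomorphisms in $\Szym(\RelF)$. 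Granting this, and noting that conjugacy by a permutation matrix $P$ is trivially a shift equivalence (take $A=P$, $B=P^{-1}$, $\ell=0$), it follows that if the reductions of two periodic matrices are permutation-conjugate then the matrices are shift equivalent.

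\emph{The ``only if'' direction, and the main obstacle.} Suppose $S$ and $S'$ are periodic and shift equivalent. Replacing each by its reduction (shift equivalent to it by the previous paragraph) we may assume both are reduced, and it remains to produce a permutation $P$ with $S'=PSP^{-1}$. First I would normalize the equivalence: from inverse Szymczak isomorphisms one obtains, as recalled above, morphisms $A\colon(X,S)\to(X',S')$ and $B\colon(X',S')\to(X,S)$ in $\End(\RelF)$ and an exponent with $BAS^k=S^{m+n+k}$ and $AS^kB=(S')^{m+n+k}$; choosing $m+n+k$ to be a positive multiple of $p$ and putting $A_1:=A\circ S^k$ (still a morphism), periodicity collapses these to $BA_1=S^p$ and $A_1B=(S')^p$, both idempotent, with $S'\circ A_1=A_1\circ S$ and $S\circ B=B\circ S'$. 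The key structural fact is that, \emph{because $S$ is reduced}, $E:=S^p$ is a partial order on $X$: if some $x_0$ were not $S$-recurrent, then $x_0\notin S^p(x_0)$ and $S(x_0)=S^{p+1}(x_0)=\bigcup\{S(y):y\in S^p(x_0)\}$ would exhibit column $x_0$ as a Boolean sum of columns other than itself, contradicting reducedness; hence $\id_X\subseteq E$, which with $E\circ E=E$ makes $E$ a preorder, and since $S\circ E=S$ forces the column $S(x)$ to be constant on each $E$-class, reducedness forces the $E$-classes to be singletons, i.e.\ $E$ is antisymmetric. The same applies to $S'$. It then remains to show that the pair $(A_1,B)$ --- with $BA_1=E$ and $A_1B=E'$ both partial orders and $A_1,B$ intertwining $S$ and $S'$ --- is forced to come from a single order-isomorphism $X\to X'$ conjugating $S$ to $S'$. \emph{This rigidity statement is the main obstacle}: a priori $A_1$ and $B$ are genuine multivalued relations that could merge or reshuffle parts of the recurrent structure, and the substance of the theorem is that reducedness of both the rows and the columns of $S$ and $S'$ leaves no room for this. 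I expect to handle first the case $p=1$, where $S=E$ is itself a partial order and the claim becomes a statement about reduced idempotent Boolean matrices, and then to reduce the general case to it by analyzing $S$ through its idempotent power $E=S^p$ and the cyclic structure $S$ induces on it.
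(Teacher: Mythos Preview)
The paper does not prove this theorem; it is quoted verbatim from Kim--Roush as background, with no argument supplied. So there is no proof in the paper to compare your proposal against.

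That said, your first assertion is correct, and the paper does independently establish an equivalent version during its reduction to canonical form: with $q$ an eventual period, $\bar R:=R^{q+1}$ satisfies $\bar R^{p+n}=\bar R^{n}$ (Proposition~\ref{prop:period}(d)) and $[R^q,q]$ gives a Szymczak isomorphism $(X,R)\cong(X,\bar R)$. Your choice of any $m\ge N$ with $m\equiv 1\pmod p$ is a mild variant of the same idea, and your shift-equivalence verification with $A=B=R^N$, $\ell=2N$ is clean.

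For the second assertion your proposal is, by your own account, incomplete. The ``if'' direction rests on an unproved lemma that deleting a single dependent row or column preserves the shift-equivalence class; this is believable but you have not actually written down the Szymczak inverse to the inclusion, only asserted that a ``rerouting relation \dots\ corrected by a suitable power'' will work. In the ``only if'' direction your observation that for a reduced periodic $S$ the idempotent $E=S^p$ is a genuine partial order is correct and useful (the column argument for reflexivity and antisymmetry is right). But the step you flag as the \emph{main obstacle} --- that an intertwining pair $(A_1,B)$ with $BA_1=E$, $A_1B=E'$ must come from a bijection --- is precisely the content of the Kim--Roush result, and you have only stated an intention to handle $p=1$ first and then lift. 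Even the $p=1$ case (two reduced finite posets, viewed as idempotent Boolean matrices, are shift equivalent iff order-isomorphic) is a genuine combinatorial statement that needs a real argument; nothing in your outline supplies one.
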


  Another important application is in defining the discrete Conley index
  for continuous self-maps \cite{S}, which provides information about invariant sets.
  The form of the index relies on choices made in its construction, and it is defined exactly up to shift equivalence.
  In this context it is useful to have a dynamical, rather than algebraic,  classification of finite relations up to shift equivalence.  Such a classification was given in \cite{MMW} as a step toward defining a Conley
  index for such relations, particularly those approximating physical systems.

To state the result from \cite{MMW} on shift equivalence, and to describe the complete invariant in the next section, we need a few more definitions.

For a relation $R$ on $X$, we write $x\to_R y$ if there is a path from $x$ to $y$ in the graph corresponding to $R$, or,
equivalently, if $y\in R^n(x)$ for some
$n \in \nn$, the set of positive integers.  We say that $x$ is \emph{recurrent}
if $x\to_R x$, and denote by $X_R$  the set of recurrent points. We write $x\leftrightarrow_R y$ if $y\to_R x$  and $x\to_R y$.  For the
purposes of shift equivalence, we can assume that every point is recurrent (\cite{MMW}Theorem 11),
so the relation $\leftrightarrow_R$ is an equivalence relation; in accordance with graph theory
usage, we call the equivalence classes the \emph{strongly connected components}.
For a recurrent point $x$, we denote the strongly connected component to which it belongs by $[x]_R$,
and the quotient $X/\leftrightarrow_R$ (the set of strongly connected components) by $\poX$.

Observe that $\to_R$ induces a partial order on $\poX$ as follows:  $[x]_R \ge [y]_R$ if $x\to_R y$, which is
clearly independent of the choice of representatives of $[x]_R$ and $ [y]_R$.  For notational clarity, we
will somewhat redundantly denote the relationship $\ge$ by $R_\ge$, that is, $([x]_R,[y]_R) \in R_\ge$ if and only if $[x]_R \ge [y]_R$.

\begin{defn}
Let $X$ be a finite set.  A relation $R$ on $X$ is in \emph{canonical form} if
\begin{enumerate}
\item every point of $X$ is recurrent,
\item $R$ is a bijection (and hence cyclic) on each strongly connected component, and
\item $R$ is periodic, that is, there exists a least integer $p\ge1$, called the period of $R$, such that $R^{p+1} = R$.
\end{enumerate}
\end{defn}

%

Shift equivalence can be characterized as follows.

\begin{thm}[\cite{MMW}Theorems 12, 13] \label{thm:CanonForm}
Let $(X,R)$ be a finite relation.  Then $(X,R)$ is shift equivalent to a finite relation in canonical form.
Furthermore, two relations $(X,R)$ and $(X',R')$  in canonical form are shift equivalent if and only if
they are isomorphic, that is, there is a bijection $h:X\to X'$ such that $h\circ R = R'\circ h$.
\end{thm}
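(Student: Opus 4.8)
The statement has two halves: every finite relation is shift equivalent to one in canonical form (existence), and two shift-equivalent canonical forms are isomorphic (rigidity). Both halves rely on results already available in the excerpt: the reduction to the recurrent case, \cite{MMW}Theorem~11, and the Kim--Roush classification of periodic Boolean matrices up to shift equivalence (\cite{KR2}Proposition~3.5, quoted above).

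\emph{Existence.} First use \cite{MMW}Theorem~11 to assume every point of $X$ is recurrent. Let $p$ be the period, and choose $N\ge 1$ with $p\mid N$, with $R^{n+p}=R^n$ for all $n\ge N$, and with $N$ large enough that for every strongly connected component $C$, writing $C=P_0\sqcup\dots\sqcup P_{d-1}$ for its cyclic (phase) partition (every internal edge running from $P_i$ to $P_{i+1}$), one has $R^N\cap(C\times C)=\bigcup_i(P_i\times P_i)$ --- this holds because $R^N\cap(C\times C)$ is a high power of a primitive relation on each $P_i$, hence the all-ones relation. An elementary manipulation of powers of $R$ (take $A=R^a$, $B=R^b$ with $a,b$ large, or invoke Kim--Roush) shows $(X,R)$ is shift equivalent to $(X,R^{N+1})$, and $R^{N+1}$ is periodic with the same recurrent points, strongly connected components, and partial order $\porder$ as $R$; moreover $R^{N+1}\cap(C\times C)=\bigcup_i(P_i\times P_{i+1})$. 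From this one checks that $R^{N+1}(x)$ and $(R^{N+1})^{-1}(x)$ depend only on the component of $x$ and the phase class of $x$: inside $C$ this is immediate, and for a lower component it reduces to the fact that, for large $n$, reachability from one point to another in exactly $n$ steps depends only on $n\bmod p$, which is unaffected by moving $x$ within its phase class. Hence $R^{N+1}$ descends to a relation $\tilde R$ on the quotient $\tilde X$ whose points are the pairs (component, phase class). One then verifies the three axioms of canonical form for $(\tilde X,\tilde R)$ directly --- every point sits on a $d_C$-cycle, $\tilde R$ is the phase shift on each component hence a bijection there, and $\tilde R^{p+1}=\tilde R$ since $R^{N+1}$ is periodic --- and that the quotient relation $q\subseteq X\times\tilde X$, together with the composite of a large power of $R^{N+1}$ with $q^{-1}$, realizes a shift equivalence between $(X,R^{N+1})$ and $(\tilde X,\tilde R)$.

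\emph{Rigidity.} The key point is a lemma: if $(X,R)$ is in canonical form, then its Boolean matrix has no dependent rows and no dependent columns, i.e.\ no row (respectively column) is a Boolean sum of some collection of the other rows (respectively columns). Granting this, rigidity is immediate: two shift-equivalent canonical forms are periodic, so by Kim--Roush their Boolean matrices are conjugate by a permutation matrix after deleting dependent rows and columns; by the lemma nothing is deleted, so they are already permutation-conjugate, which is precisely the existence of a bijection $h\colon X\to X'$ with $h\circ R=R'\circ h$. For the lemma, the $x$-column of the matrix is the indicator of $R(x)$; if it were the Boolean sum of the $w$-columns for $w$ in some set $W$ with $x\notin W$, then, letting $x^{+}$ be the unique successor of $x$ inside its own component $[x]_R$ (unique by axiom~(2)), we would have $x^{+}\in R(w_0)$ for some $w_0\in W$ with $w_0\ne x$. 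If $w_0\in[x]_R$, uniqueness of the in-component predecessor of $x^{+}$ forces $w_0=x$, a contradiction; if $w_0$ lies strictly above $[x]_R$, then $R(w_0)\subseteq R(x)$ gives $R^k(w_0)\subseteq R^k(x)$ for all $k\ge 1$, so recurrence of $w_0$ yields $w_0\in R^k(x)$ for some $k$, i.e.\ $[x]_R\ge[w_0]_R$, contradicting antisymmetry. Applying the lemma to $R^{-1}$, which is again in canonical form, rules out dependent rows.

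I expect the existence half to be the main obstacle, and within it the two verifications that $R^{N+1}$ is constant (and saturated) on phase classes and that the phase-class quotient really is a shift equivalence: this is where one must manage the combinatorics of path lengths between points in different components (for large $n$, reachability in exactly $n$ steps depends only on $n\bmod p$ and on the phase classes of the endpoints). The rigidity half is comparatively short once the lemma is isolated.
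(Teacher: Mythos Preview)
Your existence argument is essentially the paper's own sketch, reordered: the paper first passes to $\bar R=R^{q+1}$ (Periodicity), then restricts to recurrent points (Recurrence), then quotients by $\equiv_R:=R^q\cap(R^q)^{-1}$ (Quotient). Your ``phase classes'' are exactly the $\equiv_R$ classes, and your check that $R^{N+1}$ is constant on phase classes is the paper's Proposition~\ref{prop:qutient}(a), proved there in one line via $\bar R^p\circ\bar R^n\circ\bar R^p=\bar R^n$ rather than through the combinatorics of path lengths you anticipate. So the difficulty you flag is smaller than you expect once one uses the eventual period $q$ and the identity $\bar R^p=R^q$ systematically.

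For rigidity the paper gives no argument at all---it simply cites \cite{MMW}---so your route through Kim--Roush is genuinely different from what the paper does and is a nice self-contained alternative. Your key lemma (a canonical-form matrix has no Boolean-dependent rows or columns) is correct: the in-component case uses bijectivity on components, and the strictly-above case uses $R(w_0)\subseteq R(x)\Rightarrow R^k(w_0)\subseteq R^k(x)$ together with recurrence of $w_0$ to force $[x]_R\ge[w_0]_R$, contradicting antisymmetry. With that lemma in hand, Kim--Roush (already quoted in the paper) immediately gives permutation conjugacy, i.e.\ isomorphism. Compared to the direct isomorphism construction in \cite{MMW}, your argument is shorter and exploits a classification result the paper has on the shelf anyway; the trade-off is that it imports the Kim--Roush machinery as a black box.
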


The reduction to canonical form is constructive. We sketch the steps.

\subsection*{Periodicity}
 Since the infinite sequence $\{R^n \}$ consists of subsets of a finite set, it follows that the set
\begin{equation}\label{eq01}
\Per(R) = \{ q \in \mathbb{N} : R^{q + N(q)} = R^{N(q)} \  \text{for some} \ N(q) \in \mathbb{N} \}
\end{equation}
is nonempty. Clearly, if $n \ge N(q)$, then $R^{q+n} = R^n$.

\begin{defn} An \emph{eventual period} for $R$ is $q \in \mathbb{N}$ such that $R^q \circ R^q = R^q$ and so $R^{q+n} = R^n$ for all $n \ge q$.
In particular, $R^{nq} = R^q$ for all $n \in \mathbb{N}$. \end{defn}

An eventual period $q$ is an element of $\Per(R)$. On the other hand, if $q \in \Per(R)$ and $Nq \ge N(q)$, then $Nq$ is an eventual period.

  \begin{prop} \label{prop:period}
   \begin{itemize}
  \item[(a)] There exists $p \in \nn$ such that $\Per(R) = p\nn$. In particular, if $q$ is an eventual period, then $p|q$.
  We call $p$ the \emph{period} of $R$ (despite the fact that it need not be an eventual period).

  \item[(b)] If $q_1, q_2$ are eventual periods, then $R^{q_1} = R^{q_2}$.

  \item[(c)] If $q$ is an eventual period and $p$ is the period, then $R^{q+p} = R^{q}$.

  \item[(d)] Define $\bar R = R^{q+1}$ where $q$ is an eventual period. For all $n \in \mathbb{N}$,
  \begin{equation}\label{eq01a}
  \bar R^{n} = R^{q+n} = \bar R^{p+n}
 \end{equation}
  In particular, $\bar R^{np} = R^{q}$ for all $n \in \mathbb{N}$.

  \item[(e)] The period $p$ is the minimum element of the set $\{ p' \in \nn : R^{q+p'} = R^q \}$.
  \end{itemize}
  \end{prop}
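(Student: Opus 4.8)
\emph{Proof sketch.}
The plan is to reduce everything to the composition law $R^i\circ R^j=R^{i+j}$ together with the elementary equivalence: $q\in\Per(R)$ if and only if there is an $N$ with $R^{n+q}=R^n$ for all $n\ge N$. One direction is \eqref{eq01}; for the other, compose $R^{q+N(q)}=R^{N(q)}$ with $R^{n-N(q)}$. Two consequences will be used repeatedly. First, $\Per(R)$ is closed under $q\mapsto kq$ for $k\in\nn$: iterating $R^{n+q}=R^n$ (valid for $n\ge N$) yields $R^{n+kq}=R^n$. Second, an eventual period $q$ satisfies $R^{2q}=R^q$, hence $R^{n+q}=R^n$ for all $n\ge q$ (write $n=q+m$); so every eventual period lies in $\Per(R)$, and $R^{mq}=R^q$ for every $m\in\nn$.

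For (a), put $p:=\min\Per(R)$, which exists since $\Per(R)$ is a nonempty subset of $\nn$. The inclusion $p\nn\subseteq\Per(R)$ is the first consequence above. Conversely, take $q\in\Per(R)$ and fix $L$ so large that $R^{n+p}=R^n$ and $R^{n+q}=R^n$ hold for all $n\ge L$; a short induction gives $R^{m+ap}=R^m$ for all $m\ge L$ and all $a\ge 0$. Writing $q=ap+r$ with $0\le r<p$ and applying the last identity with $m=n+r$ (for $n\ge L$), we obtain $R^{n+q}=R^{(n+r)+ap}=R^{n+r}$, whence $R^{n+r}=R^n$ for all $n\ge L$. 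Thus $r\in\Per(R)$, and $r<p=\min\Per(R)$ forces $r=0$, i.e.\ $p\mid q$. The ``in particular'' clause is then immediate, since every eventual period lies in $\Per(R)=p\nn$. I expect this division-with-remainder step to be the one place in the proposition that uses a genuine idea; the rest is exponent bookkeeping.

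Parts (b)--(e) follow mechanically from $R^{mq}=R^q$ and the composition law. For (b), eventual periods $q_1,q_2$ give $R^{q_1}=R^{q_1q_2}=R^{q_2}$. For (c), choose $n$ large enough that $R^{nq+p}=R^{nq}$ (possible because $p\in\Per(R)$); then $R^{q+p}=R^{p}\circ R^{nq}=R^{nq+p}=R^{nq}=R^q$. For (d), $\bar R=R\circ R^{q}$ is well defined by (b), and $\bar R^{\,n}=R^{n(q+1)}=R^{nq}\circ R^{n}=R^{q}\circ R^{n}=R^{q+n}$; replacing $n$ by $p+n$ and invoking (c) gives $\bar R^{\,p+n}=R^{q+p+n}=R^{q+p}\circ R^{n}=R^{q}\circ R^{n}=R^{q+n}=\bar R^{\,n}$, while $\bar R^{\,np}=R^{q+np}=R^q$ follows by induction on $n$ from (c). For (e), (c) shows $p\in\{p'\in\nn:R^{q+p'}=R^q\}$; conversely, composing $R^{q+p'}=R^q$ with $R^m$ yields $R^{m+p'}=R^m$ for all $m\ge q$, so $p'\in\Per(R)=p\nn$ and hence $p\le p'$.

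The point requiring care throughout (c)--(e) is that the period $p$ need not be an eventual period --- in general $R^p\circ R^p\neq R^p$ --- so each argument must be routed through a true eventual period $q$ (or a sufficiently large multiple of it) rather than through $p$ directly. Once that is observed, no real difficulty remains.
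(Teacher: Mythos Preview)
Your argument is correct. The main departure from the paper is in part (a): you take $p=\min\Per(R)$ and show $\Per(R)=p\nn$ via Euclidean division (if $q=ap+r$ then $r\in\Per(R)\cup\{0\}$, so minimality forces $r=0$), whereas the paper takes $p=\gcd\Per(R)$, first proves $\Per(R)$ is an additive sub-semigroup, and then invokes the numerical-semigroup fact (cited from \cite{MMW}) that all sufficiently large multiples of the gcd lie in such a semigroup before concluding $p\in\Per(R)$. Your route is more self-contained, avoiding the external lemma entirely. Your proof of (b) via $R^{q_1}=R^{q_1q_2}=R^{q_2}$ is also cleaner than the paper's, which splits into cases on $q_1\le q_2$. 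Parts (c)--(e) are essentially identical to the paper's treatment.
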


   \begin{proof} (a) For $q_1, q_2 \in \Per(R),$ if $M \ge N(q_1), N(q_2)$, then $R^{q_1+q_2+M}=R^{q_2+M}=R^M.$ Thus, $\Per(R)$ is an additive sub-semigroup
   of $\mathbb{N}$. Let $p$ be the gcd of $\Per(R)$.  It follows that for sufficiently large $N$, $Np \in \Per(R)$. For a proof of this classical result
   see, e.g. \cite{MMW} Lemma 1. It follows that $p|q$ for all $q \in \Per(R)$. Let $N$ be large enough that $Np$ and $(N+1)p$ are in $\Per(R)$. If
   $M \ge N(Np),N((N+1)p)$, then $R^{p+M} = R^{Np+p+M} = R^{(N+1)p+M} = R^M.$ Thus, $p \in \Per(R)$ and so is every multiple of $p$.

   (b) Assume $q_1 \le q_2$. Since $q_1$ is an eventual period, $R^{q_1+q_2} = R^{q_2}$. Choose $N$ so that $Nq_1 \ge q_2$. Since
   $q_1$ is an eventual period $R^{Nq_1} = R^{q_1}$. Hence, because $q_2$ is an eventual period, $R^{q_1} = R^{Nq_1} = R^{Nq_1+q_2} = R^{q_1+q_2}$.

   (c) Since $p \in \Per(R)$, for large enough $N$, $R^{p+q}=R^{p+Nq}=R^{Nq}=R^q$, where again we use $R^{Nq}=R^q$.

   (d) $\bar R^n = R^{(q+1)n}= R^{nq+n}=R^{q+n}$ since $R^{nq} = R^q$.
    In particular, $\bar R^{p+n}=R^{q+p+n}=R^{q+n}$. Furthermore,
   $\bar R^{np}=R^{q+np}=R^q$ by induction on $n$.

   (e) If $R^{q+p'} = R^q $, then $p' \in \Per(R)$ and so $p|p'$.
   \end{proof}

   We now fix an eventual period $q$.

   It easily follows that $[R^q,q] :(X,R) \to (X,\bar R)$ and $[R^q,q] :(X,\bar R) \to (X,R)$ are inverse isomorphisms in $\Szym(\RelF)$.  For details
   see \cite{MMW} Theorem 9.

   \subsection*{Recurrence}
    Recall that $x$ is recurrent for $R$ if $x \rightarrow_R x$ or, equivalently, $x \in R^n(x)$ for some $n \in \mathbb{N}$.
   It then follows that for any $k \in \mathbb{N}$, $x \in R^{kn}(x)$ and so $x$ is recurrent for $R^k$. Thus, $x$ is recurrent for $R$ if and only if
   it is recurrent for $R^k$. In particular, $x$ is recurrent for $R$ if and only if  for $R^q$ and so if and only if for some $n$,
   $x \in R^{nq}(x) = R^q(x)$. Thus $X_R$,  the set of recurrent points, equals $\{ x : x \in R^q(x) = \bar R^{p}(x) \}$.

   In general, $x \rightarrow_R y$ if $y \in R^n(x)$ for some $n \in \mathbb{N}$. Hence, $x \rightarrow_{\bar R} y$
   if $y \in \bar R^n(x) = R^{q+n}(x)$ for some $n \in \mathbb{N}$. If  $y \in R^n(x)$ and $x \in X_R$, then $x \in R^q(x)$ and so $y \in R^{q+n}(x)$.
   Similarly, if $y \in X_R$, then $y \in R^{q+n}(x)$. Thus, if $x$ or $y$ is recurrent, then $x \rightarrow_R y$ if and only if $x \rightarrow_{\bar R} y$.
   In particular, if $x \leftrightarrow_R y$  then both $x$ and $y$ are recurrent and so $x \leftrightarrow_{\bar R} y$.

    \begin{lemma}\label{lem:recur1} If $x \in X_R$, then $\Per(x) = \{ n \in \mathbb{N}: x \in \bar R^n(x) = R^{q+n}(x) \}$ is nonempty and there
    exists $p'(x) \in \mathbb{N}$ such that $\Per(x) = p'(x)\mathbb{N}$. In particular, $p'(x)|p$. Furthermore, $p$ is the least common
    multiple of the set $\{ p'(x) : x \in X_R \}$.

  In addition, if $x \leftrightarrow_R y$ then $p'(x) = p'(y)$.

    \end{lemma}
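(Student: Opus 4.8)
The proof breaks into four parts; I would take the assertions in the order stated, with almost all of the work going into the last.

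First, the algebraic shape of $\Per(x)$. It is nonempty because $x\in X_R$ is precisely the statement $x\in R^{q}(x)=\bar R^{p}(x)$, i.e.\ $p\in\Per(x)$. It is an additive subsemigroup of $\nn$: if $x\in\bar R^{m}(x)$ and $x\in\bar R^{n}(x)$ then $\bar R^{m+n}(x)=\bar R^{n}(\bar R^{m}(x))\supseteq\bar R^{n}(x)\ni x$. And it is stable under $n\mapsto n\pm p$ (as long as one stays in $\nn$), since $\bar R^{n}=\bar R^{n+p}$ for all $n\in\nn$ by \eqref{eq01a}. Setting $p'(x):=\gcd\Per(x)$, the classical numerical-semigroup lemma (\cite{MMW} Lemma~1) gives $Np'(x)\in\Per(x)$ for all large $N$, so $p'(x)\mid p$; and the $p$-periodicity lets one slide those large multiples of $p'(x)$ downward to recover all of $p'(x)\nn$. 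This gives the first three assertions.

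For the remaining two I would introduce, for each $y\in X$, the least $p_y\in\nn$ with $R^{n+p_y}(y)=R^{n}(y)$ for all large $n$ (it exists because $A\mapsto R(A)$ is a self-map of the finite power set of $X$; equivalently $p_y$ is the minimal period of the purely periodic sequence $\bigl(\bar R^{m}(y)\bigr)_{m\ge1}$). Two facts then drive everything. (i) $p=\operatorname{lcm}_{y\in X}p_y$: since $q\in\Per(R)$ is equivalent to $R^{q+n}=R^{n}$ for all large $n$, hence to $p_y\mid q$ for every $y$, comparison with $\Per(R)=p\nn$ gives the equality. (ii) For recurrent $x$, $p'(x)=p_x$: if $n\in\Per(x)$ then $x\in\bar R^{n}(x)$ forces the ascending chain $\bar R^{m}(x)\subseteq\bar R^{m+n}(x)\subseteq\cdots$, which closes up after $p$ steps by $p$-periodicity, so $\bar R^{m+n}(x)=\bar R^{m}(x)$ for all $m$ and $p_x\mid n$, giving $p_x\mid p'(x)$; conversely $\bar R^{m+p_x}(x)=\bar R^{m}(x)$ for all $m\ge1$, so evaluating at $m=p$ together with $x\in\bar R^{p}(x)$ yields $x\in\bar R^{p_x}(x)$, i.e.\ $p_x\in\Per(x)$ and $p'(x)\mid p_x$. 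The invariance $x\leftrightarrow_R y\Rightarrow p'(x)=p'(y)$ is then a short walk-chase: since $x\leftrightarrow_{\bar R}y$ (as recalled before the lemma), there are $a,b\ge1$ with $y\in\bar R^{a}(x)$ and $x\in\bar R^{b}(y)$; concatenating the corresponding $\bar R$-walks gives $a+b\in\Per(y)$, and for any $n\in\Per(x)$ concatenating the walks witnessing $x\in\bar R^{b}(y)$, $x\in\bar R^{n}(x)$, $y\in\bar R^{a}(x)$ gives $a+n+b\in\Per(y)=p'(y)\nn$; subtracting $a+b$ modulo $p'(y)$ forces $p'(y)\mid n$, hence $p'(y)\mid p'(x)$, and symmetry completes it.

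The one substantial step is $p=\operatorname{lcm}\{p'(x):x\in X_R\}=:L$. Here $L\mid p$ is immediate from $p'(x)\mid p$, while for $p\mid L$ it suffices, by (i), to show $p_y\mid L$ for \emph{every} $y\in X$; the danger to rule out is that the orbit of a transient point might carry a prime appearing in no recurrent component. I would do this by induction along the partial order that $\to_R$ induces on the non-recurrent points — well founded because a cycle through a non-recurrent point would make it recurrent — or, concretely, by induction on the length of the longest $R$-walk out of $y$ whose vertices, except possibly the last, are non-recurrent. For $y\in X_R$, $p_y=p'(y)\mid L$ by (ii). For $y\notin X_R$, every $u\in R(y)$ is either recurrent or strictly below $y$ in this order, so $p_u\mid L$ by induction; and $R^{n+1}(y)=\bigcup_{u\in R(y)}R^{n}(u)$ shows $p_y\mid\operatorname{lcm}_{u\in R(y)}p_u\mid L$ (the case $R(y)=\emptyset$ being trivial, with $p_y=1$). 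I expect the only mild care needed to be the well-foundedness of that order and the divisibility $p_y\mid\operatorname{lcm}_{u\in R(y)}p_u$, both of which are routine.
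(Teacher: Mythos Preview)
Your proof is correct. The treatment of the semigroup structure of $\Per(x)$, the divisibility $p'(x)\mid p$, and the invariance under $\leftrightarrow_R$ all match the paper's argument essentially line for line.

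The genuine difference is in the $\operatorname{lcm}$ step. The paper argues directly that $R^{q+\ell}=R^q$ (with $\ell=\operatorname{lcm}\{p'(x):x\in X_R\}$) by writing, for $y\in R^q(x)$, that $y\in R^{q+np'(x)}(x)$ and choosing $n$ appropriately; this literally uses $p'(x)$ and so, as written, addresses only recurrent $x$. You instead introduce the pointwise period $p_y$ of the purely periodic sequence $(\bar R^m(y))_{m\ge1}$ for \emph{every} $y\in X$, identify $p_y=p'(y)$ on $X_R$, observe $p=\operatorname{lcm}_{y\in X}p_y$, and then push the divisibility $p_y\mid L$ from recurrent to non-recurrent points by the well-founded induction on the length of the longest non-recurrent initial segment of a walk from $y$. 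This is a genuinely different decomposition: the paper's route is shorter when one is willing to treat the non-recurrent case implicitly (it can be completed, for instance, by the pigeonhole argument of the paper's subsequent Lemma~\ref{lem:recur2}, whose proof is independent of the present lemma), while your route is self-contained and makes the role of transient points explicit. Both the well-foundedness you flag and the divisibility $p_y\mid\operatorname{lcm}_{u\in R(y)}p_u$ are indeed routine, so no difficulty hides there.
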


     \begin{proof} If $x \in \bar R^{n}(x)$, then $\bar R^m(x) \subset \bar R^{n+m}(x)$.  So if $x \in \bar R^m(x)$ as well, then
      $x \in \bar R^{n+m}(x)$.  That is, $\Per(x)$ is a sub-semigroup of $\mathbb{N}$. Let $p'(x)$ be the gcd. Since $x \in \bar R^p(x)$,
      it follows that $p'(x)|p$. As before, large enough multiples of $p' = p'(x)$ are contained in $\Per(x)$. Hence, for large enough $N$,
      $Np+p' \in \Per(x)$. Hence, $x \in \bar R^{Np+p'}(x) = \bar R^{p'}(x)$.  Thus, $p' \in \Per(x)$ and so $\Per(x) = p' \mathbb{N}$.
      If $\ell$ is the lcm of the $p'(x)$'s, then $\ell$ divides $p$ since each $p'(x)$ does. On the other hand, if
      $y \in R^q(x)$, then $y \in R^{q + np'(x)}(x)$ for all $n \in \Z_+$ and so $y \in R^{q + \ell}(x)$. Conversely, if
      $y \in R^{q + \ell}(x)$, then $y \in R^{q + \ell + np'(x)}(x)$ for all $n \in \Z_+$. We can choose $n$ so that $\ell + np'(x) = q$.
      Thus, $y \in R^{2q}(x) = R^q(x)$. We have shown that $R^{q+\ell} = R^q$. From Proposition \ref{prop:period} (e) it follows that
      $p \le \ell$.

      If $y \leftrightarrow_{\bar R} x$, then for some $i$, $y \in \bar R^{i}(x)$. Since $x \in \bar R^{np'(x)}(x)$, $y \in R^{i+np'(x)}(x)$ for all
      $n \in \mathbb{N}$. On the other hand, for some $j$, $x \in \bar R^j(y)$ and so $y \in R^{i+j+np'(x)}(y)$ for all $n$. Therefore,
      $p'(y)|(i+j+np'(x))$ for all $n$ and so $p'(y)|p'(x)$. Similarly, $p'(x)|p'(y)$.
      \end{proof}

      \begin{lemma}\label{lem:recur2} If $y \in R^{q+i+j}(x)= \bar R^{i+j}(x)$ for some $i,j \in \mathbb{N}$, then there exists $z \in X_R$
      such that $z \in R^{q+i}(x) = \bar R^{i}(x)$ and $y \in R^{q+j}(z) = \bar R^{j}(z)$.\end{lemma}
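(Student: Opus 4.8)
Think of this lemma as a ``routing'' statement: a path of length $q+i+j$ from $x$ to $y$ (with respect to $R$) can be re-routed so that the vertex at distance $q+i$ from $x$ lands in the recurrent set $X_R$. The plan is to lengthen the path in a controlled way --- by splicing in many \emph{formally distinct} copies of the idempotent relation $R^{q}=\bar R^{p}$ --- and then to pull a recurrent point out of the enlarged path by pigeonhole.

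First I would collect the tools from Proposition~\ref{prop:period}(d): $\bar R^{p}=R^{q}$, the identity $\bar R^{\,n+p}=\bar R^{\,n}$ for all $n\in\nn$, the idempotence $R^{q}\circ R^{q}=R^{q}$ (since $q$ is an eventual period), and the description $z\in X_R\iff z\in R^{q}(z)$. Set $k:=|X|+1$. Since $k$ copies of $R^{q}$ compose to $R^{q}=\bar R^{p}$, the relation $\bar R^{\,j}\circ R^{q}\circ\cdots\circ R^{q}\circ\bar R^{\,i}$ (with $k$ copies of $R^{q}$) equals $\bar R^{\,j}\circ\bar R^{p}\circ\bar R^{\,i}=\bar R^{\,i+j+p}=\bar R^{\,i+j}$. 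Unwinding the hypothesis $y\in\bar R^{\,i+j}(x)$ along this composition then produces a point $w\in\bar R^{\,i}(x)$, points $z_1\in R^{q}(w)$ and $z_{\ell+1}\in R^{q}(z_\ell)$ for $1\le\ell<k$, and finally $y\in\bar R^{\,j}(z_k)$.

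Now comes the pigeonhole step: the $k>|X|$ points $z_1,\dots,z_k$ cannot all be distinct, so $z_a=z_b$ for some $a<b$. Composing the links $z_{a+1}\in R^{q}(z_a),\dots,z_b\in R^{q}(z_{b-1})$ gives $z_a=z_b\in R^{(b-a)q}(z_a)=R^{q}(z_a)$, so $z:=z_a\in X_R$. It remains to check the position of $z$. Composing the first $a$ of the interpolated copies of $R^{q}$ with $\bar R^{\,i}$ shows $z\in(R^{q}\circ\bar R^{\,i})(x)=(\bar R^{p}\circ\bar R^{\,i})(x)=\bar R^{\,i}(x)=R^{q+i}(x)$; composing the remaining $k-a$ copies together with the trailing $\bar R^{\,j}$ shows $y\in(\bar R^{\,j}\circ\bar R^{p})(z)=\bar R^{\,j}(z)=R^{q+j}(z)$. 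This is exactly the assertion of the lemma.

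The only real obstacle here is conceptual rather than computational: the obvious factorization $\bar R^{\,i+j}=\bar R^{\,j}\circ\bar R^{\,i}$ yields a midpoint that may well be transient, and splicing in a single copy of $R^{q}$ does not change that. The trick is to keep $k$ formally separate copies of $R^{q}$ --- even though, as relations, they collapse to one --- so that the enlarged path carries enough intermediate vertices for the pigeonhole to bite. The bookkeeping then works out precisely because each inserted copy contributes a multiple of $p$ to the left exponent and a multiple of $p$ to the right exponent (so $z$ stays in $\bar R^{\,i}(x)$ and still reaches $y$ through $\bar R^{\,j}$), while the repetition length $(b-a)q$ is a multiple of $q$, which is exactly what forces $z\in R^{q}(z)$, i.e.\ $z\in X_R$.
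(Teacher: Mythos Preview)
Your proof is correct. Both arguments hinge on a pigeonhole step inside an artificially lengthened path, but you organize the lengthening differently from the paper. The paper replaces $R^{q+i+j}$ by $R^{Nq+i+j}$ for a large $N$, writes out a path in $R$ of step size one, finds a loop (hence a recurrent vertex) somewhere in the middle, then splices in a cycle of length $q$ at that vertex and finally has to choose an index $t=M_1q+i$ on the cycle so that the leftover length is $M_2q+j$. Your version instead factors $\bar R^{i+j}=\bar R^{\,j}\circ(R^q)^k\circ\bar R^{\,i}$ and applies pigeonhole directly to the $k>|X|$ checkpoints $z_1,\dots,z_k$ separated by $R^q$-blocks: a repeat $z_a=z_b$ immediately gives $z_a\in R^{(b-a)q}(z_a)=R^q(z_a)$, so recurrence comes for free, and the block structure makes the left and right exponents automatically congruent to $i$ and $j$ modulo $p$ with no extra index-shuffling. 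The trade-off is that the paper's argument stays at the level of single $R$-edges and so is perhaps more elementary in flavor, whereas yours leans harder on the algebraic identities $R^{aq}=R^q=\bar R^{\,p}$ and $\bar R^{\,n+p}=\bar R^{\,n}$ from Proposition~\ref{prop:period}; in return you avoid the mod-$q$ reduction of $i,j$, the cycle insertion, and the $M_1,M_2$ bookkeeping.
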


       \begin{proof} By reducing mod $q$ we may assume $i,j \le q$. Since $R^{Nq+i+j}=R^{q+i+j}$,
       we can choose $N$ so that $(N-2)q$ is greater than the cardinality of $X$ and obtain a
       path $x=x_0, x_1, \dots, x_{Nq+i+j}=y$. It follows that the portion of the path from $x_q$ to $x_{(N-1)q+i+j}$ contains a loop and
       so contains a recurrent point. Insert a cycle of length $q$ beginning and ending at this point. Of course all of the points of
       this cycle are recurrent. Renumbering, we have
       a path $x=x_0, \dots, x_{(N+1)q+i+j}=y$. We can choose a point $z = x_t$ on the cycle so that $t = M_1q + i$ for some $M_1 \ge 1$ and so
       that $((N+1)q+i+j)-t=M_2q +j$ for some $M_2 \ge 1$. Thus, $z \in R^{M_1q+i}(x)=R^{q+i}(x)$ and $y \in R^{M_2q+j}(z)=R^{q+j}(z)$.
         \end{proof}

         Let $\bar R^n|X_R = (\bar R^n) \cap (X_R \times X_R)$.  Using the Lemma and induction on $n$ we obtain
         \begin{equation}\label{eq01b}
           \bar R^n|X_R = (\bar R|X_R)^n
         \end{equation}
           for all $n \in \mathbb{N}$.

           Furthermore, if we define $T = R^q \cap (X \times X_R)$ and $S = R^q \cap (X_R \times X)$, it follows that
           $[T,q] :(X,\bar R) \to (X_R,\bar R|X_R)$ and $[S,q]: (X_R,\bar R|X_R) \to (X,\bar R)$ are inverse isomorphisms in $\Szym(\RelF)$.

           \subsection*{Quotient}
            On the set $X_R$ of recurrent points, we define the equivalence relation
          \begin{equation}\label{eq02}\begin{split}
                     \equiv_R := R^q \cap (R^q)^{-1} = \bar R^p \cap (\bar R^p)^{-1} = \\
                      \{ (x,y) : y \in \bar R^p(x) \ \text{and} \ x \in \bar R^p(y) \}.
                  \end{split}\end{equation}
           Let $\pi : X_R \to X_R/\equiv_R$ be the quotient map to the set of $\equiv_R$ equivalence classes and let
           $\hat R := (\pi \times \pi)(\bar R)$.
           
           \begin{prop} \label{prop:qutient} \begin{enumerate}
            \item[(a)] If $x_1 \equiv_R x_2$ and $y_1 \equiv_R y_2$, then $y_1 \in \bar R^n(x_1)$ if and only if $y_2 \in \bar R^n(x_2)$.
           That is, for all $n \in \mathbb{N}$
           \[
           (\pi \times \pi)^{-1}(\hat R^n) = \equiv_R \circ \bar R^n \circ \equiv_R = \bar R^n.
           \]
            \item[(b)] If $x, y, z$ lie in the same strongly connected component of $X_R$, i.e. each pair is connected by $\leftrightarrow_R$,
           then for any $i \in \mathbb{N}$, if $y , z \in \bar R^i(x)$, then $y \equiv_R z$.\end{enumerate}
           \end{prop}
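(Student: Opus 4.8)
The plan is to handle the two parts separately, in each case reducing to facts already available: that $\bar R^{m+p}=\bar R^m$ for every $m\ge1$ (Proposition~\ref{prop:period}(d), which by \eqref{eq01b} is inherited by $\bar R|X_R$, and which likewise gives $\bar R^{np}=R^q$ for all $n\ge1$), and that $\equiv_R$ is cut out by $\bar R^p$ while containing the diagonal of $X_R$. Throughout, $\bar R$ should be read as $\bar R|X_R$.

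For part (a), I would first note that $\bar R^n\subseteq{\equiv_R}\circ\bar R^n\circ{\equiv_R}$ holds trivially, since every pair in $\bar R^n$ has both endpoints in $X_R$ and ${\equiv_R}\supseteq\id_{X_R}$. For the reverse inclusion, take $x_1\equiv_R x_2$, $y_1\in\bar R^n(x_1)$ and $y_1\equiv_R y_2$; unwinding $\equiv_R$ gives $x_1\in\bar R^p(x_2)$ and $y_2\in\bar R^p(y_1)$, so chaining the three memberships $(x_2,x_1)\in\bar R^p$, $(x_1,y_1)\in\bar R^n$, $(y_1,y_2)\in\bar R^p$ produces $y_2\in\bar R^{n+2p}(x_2)$, and then $\bar R^{n+2p}=\bar R^{(n+p)+p}=\bar R^{n+p}=\bar R^n$ finishes it. This establishes ${\equiv_R}\circ\bar R^n\circ{\equiv_R}=\bar R^n$, which is the ``iff'' in the statement (the two directions being interchanged by swapping the subscripts $1$ and $2$). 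To pass to $\hat R$, I would prove $\hat R^n=(\pi\times\pi)(\bar R^n)$ by induction on $n$, the point being that the stray copies of $\equiv_R$ appearing when one composes push-forward relations are absorbed precisely by the identity just proved; then $(\pi\times\pi)^{-1}(\hat R^n)={\equiv_R}\circ\bar R^n\circ{\equiv_R}$ by the usual description of the preimage of a push-forward.

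For part (b), I would fix $x,y,z$ in a single strongly connected component of $X_R$ and $i\in\nn$ with $y,z\in\bar R^i(x)$, and show $z\in\bar R^p(y)$; the symmetric statement $y\in\bar R^p(z)$ follows by interchanging $y$ and $z$, and together they give $(y,z)\in\bar R^p\cap(\bar R^p)^{-1}={\equiv_R}$. Let $d=p'(x)=p'(y)=p'(z)$ be the common period of the component (Lemma~\ref{lem:recur1}), so $d\mid p$. Since $x$ and $y$ lie in a common component we have $x\leftrightarrow_{\bar R}y$, so we may pick $j\ge1$ with $x\in\bar R^j(y)$; then $z\in\bar R^{i+j}(y)$ and also $x\in\bar R^{i+j}(x)$, whence $i+j\in\Per(x)=d\nn$, say $i+j=\ell d$. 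Composing with $y\in\bar R^{md}(y)$, valid for all $m\ge0$ because $\Per(y)=d\nn$, gives $z\in\bar R^{(\ell+m)d}(y)$ for every $m\ge0$; choosing $m$ so that $(\ell+m)d$ is a large multiple of $p$ — possible because $d\mid p$ — and invoking $\bar R^{np}=R^q=\bar R^p$, we conclude $z\in\bar R^p(y)$.

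The step I expect to be the real obstacle is this last one in part (b): the easy chaining delivers only $z\in\bar R^{kd}(y)$ for all large $k$, and one must then descend to the exact exponent $p$. It works precisely because $d\mid p$, so among the exponents $kd$ there are arbitrarily large multiples of $p$, and every power $\bar R^{np}$ collapses to $\bar R^p$; without the divisibility $d\mid p$ — which is itself the substance of Lemma~\ref{lem:recur1} — the argument would break down. Part (a), by contrast, is essentially bookkeeping once one notices that $\bar R^{n+2p}=\bar R^n$.
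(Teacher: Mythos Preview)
Your proof is correct and follows essentially the same route as the paper's: part (a) reduces to $\bar R^p\circ\bar R^n\circ\bar R^p=\bar R^{n+2p}=\bar R^n$ (the paper states this in one line, while you spell out both inclusions and the passage to $\hat R^n$), and part (b) proceeds exactly as the paper does---pick $j$ with $x\in\bar R^j(y)$, deduce $p'\mid i+j$, pad the exponent by multiples of $p'$ until it becomes a multiple of $p$, and collapse via $\bar R^{kp}=\bar R^p$. The only cosmetic difference in (b) is that you insert the periodic loop at $y$ (using $p'(y)$) whereas the paper inserts it at $x$ (using $p'(x)$); since Lemma~\ref{lem:recur1} gives $p'(x)=p'(y)$, the two are interchangeable.
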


            \begin{proof} (a) This is obvious from $\bar R^p \circ \bar R^n \circ \bar R^p = \bar R^n$.

            (b) Let $p' = p'(x)$. There exists $j \in \mathbb{N}$ such that $x \in \bar R^j(y)$ and so $x \in \bar R^{i+j}(x)$.
            From the definition of $p'(x)$ we have $p'|(i+j)$.  Write $i+j = ap'$.

            For any $n \in \mathbb{N}$, $x \in \bar R^{np'}(x)$ and so $z \in \bar R^{i+np'}(x)$. It follows that
            $z \in \bar R^{i+j+np'}(y)$.  Since $p'|p$ we can write $p = bp'$. If we choose $n = a(b-1)+b$, then $p|i+j+np'$ and so
            $z \in \bar R^p(y)$.  Similarly, $y \in \bar R^p(z)$ and so $y \equiv_R z$.
             \end{proof}

             It follows from part (a) that $[\pi,0] : (X_R,\bar R|X_R) \to (X_R/\equiv_R,\hat R)$
             and $[\pi^{-1},0] : (X_R/\equiv_R,\hat R) \to (X_R,\bar R|X_R)$
             are inverse isomorphisms in $\Szym(\RelF)$.  Note that distinct $\leftrightarrow_R$ classes are mapped to distinct
             $\leftrightarrow_{\hat R}$ classes.

             From part (b) it follows that $(X_R/\equiv_R,\hat R)$ is canonical.

\section{The main result} \label{sect:main}

Intuitively,  if a relation $R$ on $X$  is in canonical form, with  $p$ as its period and $\porder$ as
the partial order on the strongly connected components, we might like to think of $R$ as a product
$\porder \times \tau_p$ on $\poX \times \Z/p\Z$, where $\tau_p$ is the shift on $\Z/p\Z$, that is,
$\tau_p(t) = t+1 \mod p$.  However, it is more complicated than that, for two reasons.  First,
for a given strongly connected component, the period of the cycle may be a proper divisor of $p$,
and different components may have different periods.  Second, several points of one component
may be related to a single point of another component, or vice versa.  We can address both of these issues as follows.

In the simplest case, when these issues do not arise,  $(X,R)$ is isomorphic to
$\porder \times \tau_p$ on $\poX \times \Z/p\Z$: we can identify a point $x \in X$ with a point
$(a,t) \in  \poX \times \Z/p\Z$, and the image $R(x)$ with the set $\{(b,t+1): (a,b)\in \porder\}$.
  Otherwise, for each point $(a,t) \in \poX \times \Z/p\Z$, we need to keep track of which other
  points are in the image, that is, for each $b\in\porder(a)$, the set of integers $k \mod p$ such that $(b,t+1+k)$ is in the image of $(a,t)$.

  More formally, let $\mathcal L_p$ be the collection of nonempty subsets of $\Z/p\Z$.
  For $F_1$, $F_2 \in \mathcal L_p$, define $F_1+F_2:=\{t_1+t_2:t_1\in F_1, t_2\in F_2\}$.
  Then $\mathcal L_p$ is a commutative monoid with identity $\{0\}$.

\begin{defn}
 A map $\xi:\porder \to \mathcal L_p$ is a \emph{cocycle} for the partial order $\porder$ if it satisfies the condition:
\begin{equation}\label{eq01d} (a,b), (b,c) \in \porder \Rightarrow \xi(a,b) + \xi(b,c) \subseteq \xi(a,c).\end{equation}
 \end{defn}
 (Note that $(a,b), (b,c) \in \porder$ implies that $(a,c)\in \porder$.)  If $\xi$, $\xi'$ are cocycles,
 then $\xi+\xi'$ is a cocycle, and so the cocycles form a monoid with identity $\xi_0$ defined by $\xi_0(a,b) = \{0\}$ for all $(a,b) \in \porder$.

  \begin{lemma}\label{lem:subgroup}
 Let $\xi$ be a cocycle for $\porder$.
  If $(a,b)\in \porder$, then $\xi(a,a)$ and $\xi(b,b)$ are subgroups of $\Z/p\Z$ and
 \begin{equation}\label{eq01e} \xi(a,a) +\xi(a,b) = \xi(a,b) = \xi(a,b) + \xi(b,b). \end{equation}
  \end{lemma}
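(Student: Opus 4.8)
The statement has two parts: that $\xi(a,a)$ is a subgroup of $\Z/p\Z$ whenever $(a,a)\in\porder$ (which holds for every vertex $a$, and in particular for both $a$ and $b$ when $(a,b)\in\porder$), and the absorption identity \eqref{eq01e}. The plan is to prove the subgroup claim first, then deduce \eqref{eq01e} from the cocycle condition plus a counting/stabilization argument.

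For the subgroup claim, fix $a$ with $(a,a)\in\porder$ and write $F=\xi(a,a)$. Applying the cocycle inequality \eqref{eq01d} with $b=c=a$ gives $F+F\subseteq F$, so $F$ is a sub-semigroup of the finite commutative monoid $\Z/p\Z$ that contains... well, it is a nonempty subset closed under addition. The key elementary fact is that a nonempty subset of a finite abelian group closed under the group operation is automatically a subgroup: for any $t\in F$, the powers $t, 2t, 3t,\dots$ all lie in $F$, and since $\Z/p\Z$ is finite the element $t$ has finite additive order $d$, so $0=dt\in F$ and $-t=(d-1)t\in F$. Hence $0\in F$ and $F$ is closed under negation, so $F$ is a subgroup. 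This gives the subgroup assertion for $\xi(a,a)$ and, by the same argument, for $\xi(b,b)$.

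For \eqref{eq01e}: since $0\in\xi(a,a)$ (just proved) and $\{0\}$ is the identity of $\mathcal L_p$, we get $\xi(a,b)=\{0\}+\xi(a,b)\subseteq\xi(a,a)+\xi(a,b)$. For the reverse inclusion, apply the cocycle condition \eqref{eq01d} with the triple $(a,a),(a,b)\in\porder$ (legitimate since $(a,a)\in\porder$ always, and $(a,c):=(a,b)\in\porder$): this yields $\xi(a,a)+\xi(a,b)\subseteq\xi(a,b)$. Combining the two inclusions gives $\xi(a,a)+\xi(a,b)=\xi(a,b)$. The identity $\xi(a,b)=\xi(a,b)+\xi(b,b)$ is symmetric: $0\in\xi(b,b)$ gives one inclusion, and \eqref{eq01d} applied to $(a,b),(b,b)\in\porder$ gives $\xi(a,b)+\xi(b,b)\subseteq\xi(a,b)$, the other.

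**Main obstacle.** There is essentially no serious obstacle here — the only non-formal ingredient is the lemma that a nonempty finite subset of an abelian group closed under addition is a subgroup, and that is a one-line order argument. The one point requiring a little care is making sure $(a,a)\in\porder$ is available as a hypothesis: in the partial order $\porder$ on $\poX$, reflexivity gives $(a,a)\in\porder$ for every $a$, so in particular for the $a$ and $b$ in the statement; this is what licenses feeding $(a,a)$ into the cocycle condition. I would state that reflexivity explicitly at the start of the proof to avoid any gap.
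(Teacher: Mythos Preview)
Your proof is correct and follows essentially the same approach as the paper: first show that $F+F\subseteq F$ forces $F$ to be a subgroup (the paper uses $0=p\cdot t$ and $-t=(p-1)t$ in $\Z/p\Z$ rather than the order of $t$, but this is the same idea), then combine the cocycle inclusion $\xi(a,a)+\xi(a,b)\subseteq\xi(a,b)$ with $0\in\xi(a,a)$ to get equality. Your explicit remark on reflexivity of $\porder$ is a nice touch that the paper leaves implicit.
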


 \begin{proof}
 For $F\in \mathcal L_p$, we claim that $F+F\subseteq F$ if and only if $F$ is a subgroup of $\Z/p\Z$.
 Clearly, if $F$ is a subgroup, then $0\in F$ and $F+F=F$.  Conversely, if $F+F\subseteq F$, then $F$ is
 closed under addition, and so if $a$ is in $F$, then so are $2a,3a,\ldots$.
 In particular, $0=pa\in F$ and $-a=(p-1)a \in F$.  So $F$ is a subgroup.

 By the definition of cocycle, we have $\xi(a,a) +\xi(a,b) \subset\xi(a,b)$.  Since $0\in\xi(a,a)$, equality holds.
 The proof that $\xi(a,b) = \xi(a,b) + \xi(b,b)$ is similar.
 \end{proof}

 For a cocycle $\xi:\porder \to \mathcal L_p$ and $a \in \poX$, let $q_a$ and $p_a$ be the order and index, respectively, of the
 subgroup $\xi(a,a) \subseteq \Z/p\Z$. Let $\zeta = \bigcap \{ \xi(a,a) : a \in \poX \}$. Let $q_*$ and $p_*$
 be the order and index, respectively, of the  subgroup $\zeta \subseteq \Z/p\Z$.

 \begin{defn} A cocycle $\xi:\porder \to \mathcal L_p$ is called \emph{irreducible} when it satisfies the following equivalent conditions:
 \begin{itemize}
 \item The subgroup $\zeta$ is trivial, i.e. $\bigcap \{ \xi(a,a) : a \in \poX \} = \{ 0 \}$.

 \item The least common multiple of  the set $\{ p_a : a \in \poX \}$ equals $p$.

 \item The greatest common divisor of the set $\{ q_a : a \in \poX \}$ equals $1$.
  \end{itemize}

  Otherwise, $\xi$ is called \emph{reducible}. \end{defn}

 In general we have:
 \begin{align*}
 p_* := \text{the index of}& \ \ \zeta = lcm \{ p_a : a \in \poX \}. \\
 q_* := \text{the order of}& \ \ \zeta = gcd \{ q_a : a \in \poX \}.
 \end{align*}

 \begin{proof} Using the quotient map $\mu_p : \Z \to \Z/p\Z$, $\mu_p^{-1}(\xi(a,a)) = p_a \Z$ for some $p_a \in \nn$. The
 quotient map $\mu_{p_a}$ factors to define a surjection from $\Z/p\Z \to \Z/p_a\Z$ with kernel $\xi(a,a)$. Hence,
 $p_a$ is the index of $\xi(a,a)$ in $\Z/p\Z$. Furthermore, $\mu_p^{-1}(\zeta) = p_* \Z$ where $p_*$ is the lcm of the $p_a$'s.
 Hence, $p_*$ is the index of $\zeta$.

 Now we use the number theory fact that if $p_1,\dots,p_n$ all divide $p$ and $p_*$ is the lcm, then
 $$ p/p_* \ = \ gcd(p/p_1,\dots,p/p_n).$$
 This is trivial for $n=1$ and is an easy exercise for $n=2$. The general result then follows by induction.

 It then follows that the order of $\zeta$, which equals $ \ p/p_*$, is the gcd of the orders $p/p_a$.

 The equivalences in the above definition are now clear.
 \end{proof}

 Observe that if $\xi$ is reducible, then we have a surjection $\rho_{p^*} : \Z/p\Z \to \Z/p_*\Z$ with kernel $\zeta$.
 From Lemma \ref{lem:subgroup} it follows that $\zeta + \xi(a,b) + \zeta = \xi(a,b)$ for all $(a,b)\in \porder$.
 If we define $\xi^*(a,b)$ to be the image of $\xi(a,b)$ under the map $\rho_{p^*}$, then we obtain a
 cocycle $\xi^*:\porder \to \mathcal L_{p^*}$ with $\xi(a,b) = \rho_{p^*}^{-1}(\xi^*(a,b))$ for all $(a,b)\in \porder$.
 We call $\xi^*$ the \emph{irreducible quotient} of $\xi$.

 Intuitively, we think of $\xi(a,b)$ as specifying the points $(b,s)\in \poX\times\Z/p\Z$ that are in the image of $(a,t)$.
 Given a cocycle $\xi$, we define the relation $\Rprodxi$ on $\poX \times \Z/p\Z$ by
 \begin{equation}\label{eq01f}\Rprodxi(a,t) = \{(b,s) : b \in \porder(a), s-t-1\in\xi(a,b)\},\end{equation}
 and more generally  $\Rprodxi^i(a,t) = \{(b,s) : b \in \porder(a), s-t-i\in\xi(a,b)\}$.
 In particular, $\Rprodxi(a,0) = \{(b,s'+1) : b \in \porder(a), s'\in\xi(a,b)\}$.

 \begin{prop} \label{prop:Rproduct}
 $\Rprodxi^i \circ \Rprodxi^j = \Rprodxi^{i+j}$ and $\Rprodxi^{p+i} = \Rprodxi^i$ for $i,j\ge1$.
 Every point of $\poX \times \Z/p\Z$ is recurrent for $\Rprodxi$.
 \end{prop}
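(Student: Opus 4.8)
The plan is to first establish, by induction on $i \ge 1$, the closed formula $\Rprodxi^i(a,t) = \{(b,s) : b \in \porder(a),\ s - t - i \in \xi(a,b)\}$ asserted right after \eqref{eq01f}; once this is in hand, the composition law, the periodicity, and the recurrence statement all follow with little extra work. The base case $i = 1$ is the definition of $\Rprodxi$. For the inductive step, since $\Rprodxi^{i+1} = \Rprodxi \circ \Rprodxi^i$, a pair $\bigl((a,t),(c,u)\bigr)$ lies in $\Rprodxi^{i+1}$ exactly when there is an intermediate point $(b,s)$ with $(a,b) \in \porder$ and $s - t - i \in \xi(a,b)$ (using the inductive hypothesis for $\Rprodxi^i$), and $(b,c) \in \porder$ and $u - s - 1 \in \xi(b,c)$.

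For the inclusion ``$\subseteq$'' I would use transitivity of $\porder$ to get $(a,c) \in \porder$, then add the two membership conditions and invoke the cocycle inequality \eqref{eq01d}, namely $\xi(a,b) + \xi(b,c) \subseteq \xi(a,c)$, to conclude that $(s - t - i) + (u - s - 1) = u - t - (i+1) \in \xi(a,c)$. For ``$\supseteq$'', given $(a,c) \in \porder$ and $u - t - (i+1) \in \xi(a,c)$, I would exhibit the witnessing point by taking $b = c$ and $s = u - 1$: then $(a,c),(c,c) \in \porder$ by what we just showed and by reflexivity of $\porder$, while $s - t - i = u - t - (i+1) \in \xi(a,c)$ and $u - s - 1 = 0 \in \xi(c,c)$, the latter because $\xi(c,c)$ is a subgroup of $\Z/p\Z$ by Lemma \ref{lem:subgroup}. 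This small device --- routing the witnessing path through the reflexive loop at $c$ and using that the diagonal values of $\xi$ are subgroups --- is really the only idea in the argument; everything else is bookkeeping. The general composition law $\Rprodxi^i \circ \Rprodxi^j = \Rprodxi^{i+j}$ is then the same computation carried out in a single step (transitivity plus \eqref{eq01d} for one inclusion; $b = c$, $s = u - i$ for the other).

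With the formula established, periodicity is immediate: in the second coordinate we work in $\Z/p\Z$, so $s - t - (p + i)$ and $s - t - i$ name the same element, and hence $\Rprodxi^{p+i}$ and $\Rprodxi^i$ have identical defining conditions. Finally, each $(a,t)$ is recurrent for $\Rprodxi$ because $(a,t) \in \Rprodxi^p(a,t)$: indeed $a \in \porder(a)$ and $t - t - p = 0 \in \xi(a,a)$, again using that $\xi(a,a)$ is a subgroup. I do not expect any serious obstacle here; the only points requiring care are consistently using reflexivity and transitivity of $\porder$ and the fact from Lemma \ref{lem:subgroup} that each $\xi(a,a)$ is a subgroup, so that it contains $0$.
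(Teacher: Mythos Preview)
Your proof is correct and follows essentially the same route as the paper: both directions of the inclusion use transitivity of $\porder$ together with the cocycle inequality for ``$\subseteq$'', and for ``$\supseteq$'' the identical trick of routing through the target component $c$ via the intermediate point $(c,u-i)$ and invoking $0 \in \xi(c,c)$ from Lemma~\ref{lem:subgroup}. The only cosmetic difference is that you set up an explicit induction on $i$ to derive the closed formula first, whereas the paper states the formula and proves the semigroup law $\Rprodxi^i \circ \Rprodxi^j = \Rprodxi^{i+j}$ in one stroke (which then yields the formula by induction); the substance is the same.
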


 \begin{proof}
 If $(b,s)\in \Rprodxi^j(a,t)$ and $(c,r)\in \Rprodxi^i(b,s)$, then $c \in \porder(a)$ and
 $r-t-(i+j) = (s-t-j)+(r-s-i) \in \xi(a,b) + \xi(b,c) \subseteq\xi(a,c)$.  Thus
 $(c,r) \in \Rprodxi^{i+j}(a,t)$, and thus $\Rprodxi^i \circ \Rprodxi^j \subseteq\Rprodxi^{i+j}$.
 To prove the opposite inclusion, suppose that $(c,r) \in \Rprodxi^{i+j}(a,t)$, so that
 $u=r-t-(i+j) \in \xi(a,c)$.  It is clear that $(c,t+u+j)\in \Rprodxi^j(a,t)$, since
 $t+u+j-t -j = u\in \xi(a,c)$, and $(c,r) \in \Rprodxi^i(c,t+u+j)$, since $r-(t+u+j)-i = 0 \in \xi(c,c)$, by Lemma~\ref{lem:subgroup}.
  Thus $\Rprodxi^{i+j} \subseteq\Rprodxi^i \circ \Rprodxi^j $.

 Because addition is mod $p$ it follows from the definition that $\Rprodxi^{p+i} = \Rprodxi^i$.
 Finally, since $t-t-p = 0 \in \xi(a,a)$, we have $(a,t) \in \Rprodxi^p(a,t)$, and thus every point is recurrent.

 \end{proof}

 We will recover the period of a strongly connected component, which may be a proper divisor of $p$, as follows.
 Let $p_a$ be the index of the subgroup $\xi(a,a)$ in $\Z/p\Z$.  Then the projection from $\Z$ to $\Z/p_a\Z$
 factors to define the surjective homomorphism $\rho_a:\Z/p\Z \to \Z/p_a\Z$ with kernel $\xi(a,a)$.  Thus the period
 of the component is the index $p_a$.

 We can now construct a relation $\Rcanonxi$ in canonical form.  Define

\begin{equation}\label{eq03}\begin{split}
 \Xcanonxi  = \bigcup_{a \in \poX} &\{a\} \times \Z/p_a\Z. \hspace{2cm}
 \\
 \rho: \poX \times \Z/p\Z \to \Xcanonxi &\text{ with $\rho(a,t)=(a,\rho_a(t))$.}
 \\
\Rcanonxi = &\rho\circ\Rprodxi\circ\rho^{-1}. \hspace{2.5cm}
 \end{split}\end{equation}

 \begin{thm}
 If $\xi$ is an irreducible cocycle, then
 $\Rcanonxi$ is a canonical relation  on $\Xcanonxi$ with $\Rcanonxi^{p+i} = \Rcanonxi^i$ for $i\ge1$ and with period $p$.
 \end{thm}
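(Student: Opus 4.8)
The plan is to transport everything through the surjection $\rho:\poX\times\Z/p\Z\to\Xcanonxi$, using Proposition~\ref{prop:Rproduct} for $\Rprodxi$ and Lemma~\ref{lem:subgroup} to control how $\rho$ interacts with composition. First I would prove $\Rcanonxi^i=\rho\circ\Rprodxi^i\circ\rho^{-1}$ for all $i\ge 0$. At $i=0$ this reads $\id_{\Xcanonxi}=\rho\circ\rho^{-1}$, which holds because $\rho$ is onto; the induction step then reduces to the ``absorption'' identity $\Rprodxi\circ(\rho^{-1}\circ\rho)=\Rprodxi$. Here $\rho^{-1}\circ\rho$ is the equivalence identifying $(a,t)$ with $(a,t')$ whenever $t-t'\in\xi(a,a)$, and the identity follows at once from $\xi(a,a)+\xi(a,b)=\xi(a,b)$ together with $0\in\xi(a,a)$ (Lemma~\ref{lem:subgroup}), by a set computation of the kind already used in the proof of Proposition~\ref{prop:Rproduct}. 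Granting this, $\Rcanonxi^{p+i}=\rho\circ\Rprodxi^{p+i}\circ\rho^{-1}=\rho\circ\Rprodxi^i\circ\rho^{-1}=\Rcanonxi^i$ for $i\ge1$ by Proposition~\ref{prop:Rproduct}; in particular $\Rcanonxi^{p+1}=\Rcanonxi$, so $\Rcanonxi$ is periodic, with least period $p'\le p$.

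Next I would verify the three conditions defining canonical form. Condition~(1): every $(a,t)$ is recurrent for $\Rprodxi$ — indeed $(a,t)\in\Rprodxi^p(a,t)$ by Proposition~\ref{prop:Rproduct} — so applying $\rho$ shows every point of $\Xcanonxi$ is recurrent for $\Rcanonxi$. For~(2) and~(3) I would write $\Rcanonxi$ out explicitly: unwinding the definitions, and invoking absorption for well-definedness in the choice of $t_0$ with $\rho_a(t_0)=\bar t$,
\[
\Rcanonxi^n(a,\bar t)\;=\;\bigcup_{b\in\porder(a)}\{b\}\times\rho_b\bigl(t_0+n+\xi(a,b)\bigr).
\]
The $b=a$ summand is the single point $(a,\rho_a(t_0+n))$, since $\xi(a,a)+\xi(a,a)=\xi(a,a)=\ker\rho_a$; hence $\Rcanonxi$ restricted to $\{a\}\times\Z/p_a\Z$ is the cyclic shift of order $p_a$. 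Since $\Rcanonxi^n(a,\bar t)$ meets only those $\poX$-coordinates lying in $\porder(a)$ and $\porder$ is antisymmetric, every $\leftrightarrow_{\Rcanonxi}$-class lies over a single $a\in\poX$; combined with the preceding sentence, the strongly connected components of $\Rcanonxi$ are exactly the sets $\{a\}\times\Z/p_a\Z$, on each of which $\Rcanonxi$ acts as a $p_a$-cyclic bijection. This gives~(2), and~(3) was established already.

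To pin the period down to exactly $p$, I would compare the explicit formula for $\Rcanonxi^{p'+1}$ with that for $\Rcanonxi$: they coincide if and only if $p'+\xi(a,b)=\xi(a,b)$ for every $(a,b)\in\porder$. Taking $a=b$ and using that the translation-stabilizer of a subgroup of $\Z/p\Z$ is that subgroup itself, this forces $p'\bmod p\in\xi(a,a)$ for each $a$, i.e.\ $p'\bmod p\in\zeta=\bigcap_{a\in\poX}\xi(a,a)$; conversely $p'\bmod p\in\zeta\subseteq\xi(a,a)$ already makes all the required equalities hold. So the least valid $p'$ is the least positive integer congruent mod $p$ to an element of $\zeta$, and since $\xi$ is irreducible $\zeta=\{0\}$, whence $p'=p$. (Equivalently, the period of a canonical relation is the $\operatorname{lcm}$ of the orders of its cyclic components, here $\operatorname{lcm}\{p_a:a\in\poX\}$, which equals $p$ exactly by the definition of irreducibility.)

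The argument has no deep difficulty; the delicate point is the bookkeeping of relational composition through the non-injective map $\rho$ — that is, the absorption identity $\Rprodxi\circ(\rho^{-1}\circ\rho)=\Rprodxi$ and its consequences, namely the clean formula for $\Rcanonxi^n$ and its independence of the representative $t_0$. The only genuine use of a hypothesis is irreducibility, and it enters precisely in the last step to force $\zeta=\{0\}$ (equivalently $\operatorname{lcm}\{p_a\}=p$).
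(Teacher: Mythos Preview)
Your proof is correct and follows essentially the same route as the paper: both establish $\Rcanonxi^i=\rho\,\Rprodxi^i\,\rho^{-1}$ via the absorption identity coming from Lemma~\ref{lem:subgroup}, identify the strongly connected components as the fibers $\{a\}\times\Z/p_a\Z$ with cyclic action, and then use irreducibility to pin down the period. The only minor difference is that the paper invokes Lemma~\ref{lem:recur1} to get period${}=\operatorname{lcm}\{p_a\}$, whereas you give a self-contained computation showing $\Rcanonxi^{p'+1}=\Rcanonxi$ forces $p'\bmod p\in\zeta$ (and then note the lcm argument as an alternative); both are fine.
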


 \begin{proof}
 The strongly connected components are of the form $\{a\} \times \Z/p_a\Z$ for ${a \in \poX}$.  By
 Proposition~\ref{prop:Rproduct} and Lemma~\ref{lem:subgroup}, the restriction of $\Rcanonxi$ to  $\{a\} \times \Z/p_a\Z$ is cyclic of period $p_a$.
 $\Rcanonxi^i = (\rho\circ\Rprodxi\circ\rho^{-1})^i = \rho\Rprodxi^i\rho^{-1}$, again by Lemma~\ref{lem:subgroup},
 and so $\Rcanonxi^{p+i} = \Rcanonxi^i$ by Proposition~\ref{prop:Rproduct}.

 Because the period is $p_a$ on the strongly connected component $\{a\} \times \Z/p_a\Z$, it follows from Lemma \ref{lem:recur1} that the
 period equals the lcm of the $p_a$'s.  This equals $p$ because $\xi$ is irreducible.
 \end{proof}

 Observe that if $\xi$ is reducible,  $\Rcanonxi$ is still a canonical relation  on $\Xcanonxi$  but with period $p_*$ the index of
 the subgroup $\zeta \subseteq \Z/p\Z$.  Furthermore, if $\xi^*:\porder \to \mathcal L_{p^*}$ is the irreducible quotient constructed above,
 it is easy to check that the two cocycles yield same canonical relation  on $\Xcanonxi$.

\begin{ex}
Let $\poX = \{a,b\}$, with $a\ge b$, and let $p=6$.  If $\xi(a,a)=\xi(a,b)=\xi(b,b)=\{0\}$, then
$\Xcanonxi = \poX \times \Z/6\Z$ and $\Rcanonxi = \Rprodxi =\porder \times \tau_6$.  This is the simplest case.
\end{ex}

\begin{ex}
Again, let $\poX = \{a,b\}$, with $a\ge b$, and let $p=6$.  If $\xi'(a,a)= \{0,2,4\}$ and
$\xi'(b,b)=\{0\}$, then $q_a =2$, and $\Xcanon{\xi'} = (\{a\}\times \Z/2\Z) \cup (\{b\}\times \Z/6\Z)$.
If $\xi'(a,b)= \{1,3,5\}$, then $\Rcanon{\xi'}(a,0) = \{(a,1)\} \cup \{b\} \times \{0,2,4\}$ and
$\Rcanon{\xi'}(a,1) = \{(a,0)\} \cup \{b\} \times \{1,3,5\}$.
\end{ex}

\begin{ex} \label{ex:cohomCocycles}
Now let $\poX = \{a,b\}$, with $a\ge b$, and let $p=2$.  If $\xi(a,a)=\xi(a,b)=\xi(b,b)=\{0\}$,
then again we get the simplest case, $\Xcanonxi = \poX \times \Z/2\Z$ and $\Rcanonxi = \Rprodxi =\porder \times \tau_2$.
If instead we have $\xi'(a,b)=\{1\}$ and $\xi'(a,a)=\xi'(b,b)=\{0\}$, then
$\Rcanon{\xi'}(a,0) = \{(a,1)\} \cup \{(b,0)\}$ and $\Rcanon{\xi'}(a,1) = \{(a,0)\} \cup \{(b,1)\}$.
Note that $(\Xcanonxi,\Rcanonxi)$ and $(\Xcanon{\xi'},\Rcanon{\xi'})$ are isomorphic via the bijection that interchanges $(b,0)$ and $(b,1)$.
\end{ex}

 So, given a partial order, a period, and a cocycle, we can construct a relation in canonical form.
 Next, given a relation in canonical form,
 from which we obtain a partial order, period, and cocycle,
 we show that the
 relation constructed as above is
 isomorphic to the original.

Let $(X,R)$ be a relation in canonical form
so that $R$ has a period $p$.  Let $\pi:X\to\poX$
be the projection to the set of strongly connected components; then $\porder = \pi\circ R\circ\pi^{-1}$ is the induced partial order on $\poX$.

Since $R$ is in canonical form, the restriction of $R$ to the strongly connected components is a
bijection; denote this bijection by $f:X \to X$.

\begin{lemma}\label{lem:new} The bijection $f$ is an isomorphism from $(X,R)$ to itself,
i.e. $(x,y) \in R$ if and only if $(f(x),f(y)) \in R$. So for all $i \in \nn$
\begin{equation}\label{eq04}
 f \circ R^i = R^i \circ f = R^{i+1}.
 \end{equation}\end{lemma}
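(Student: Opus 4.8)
The plan is to first pin down how $f$ interacts with the period $p$, then prove the isomorphism property $(x,y)\in R \iff (f(x),f(y))\in R$, and finally read off the identities $f\circ R^i = R^i\circ f = R^{i+1}$ as easy consequences.

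I would begin by recording elementary facts. Since $(X,R)$ is in canonical form, every point is recurrent and $R$ restricts on each strongly connected component $C$ to a single cyclic permutation, which is precisely $f|_C$; in particular $f(x)\in R(x)$ for every $x$. Because the partial order $\porder$ is antisymmetric, any $R$-path whose endpoints lie in a single component $C$ must stay inside $C$, and hence $R^n\cap(C\times C)=(R\cap(C\times C))^n$ for all $n$. Intersecting $R^{p+1}=R$ with $C\times C$ therefore yields $f^{p+1}=f$ on $C$, and since $f$ is a bijection this gives $f^p=\id_X$.

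The crux is the following observation: if $y\in R(x)$ then $y\in R^p(f(x))$. To see it, go once around the cycle of the component of $x$: as $f^p(x)=x$, the sequence $f(x),f^2(x),\dots,f^p(x)=x$ is an $R$-path of length $p-1$ from $f(x)$ to $x$, and appending the edge $x\to y$ gives an $R$-path of length $p$ from $f(x)$ to $y$. Combined with $f(y)\in R(y)$ and periodicity, $f(y)\in R(R^p(f(x)))=R^{p+1}(f(x))=R(f(x))$; this is the implication $(x,y)\in R\Rightarrow(f(x),f(y))\in R$. Applying that implication $p-1$ further times and using $f^p=\id_X$ gives the converse, so $(x,y)\in R\iff(f(x),f(y))\in R$, i.e.\ $f\circ R=R\circ f$.

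For the displayed identities: $R\circ f\subseteq R\circ R=R^2$ since $f(x)\in R(x)$; conversely, if $z\in R^2(x)$ choose $y\in R(x)$ with $z\in R(y)$, so by the crux $z\in R(R^p(f(x)))=R^{p+1}(f(x))=R(f(x))$, whence $R^2\subseteq R\circ f$. Thus $f\circ R=R\circ f=R^2$, and then $R^{i+1}=R^{i-1}\circ R^2=R^{i-1}\circ R\circ f=R^i\circ f=f\circ R^i$ for all $i\ge1$ by a one-line induction. I expect the only genuine difficulty to be finding the crux --- recognizing that one should rotate once around the component cycle of $x$ to convert an edge out of $x$ into an edge out of $f(x)$ at the cost of $p$ steps, so that $R^{p+1}=R$ can then collapse the length back to $1$; the identity $f^p=\id_X$, obtained by restricting $R^{p+1}=R$ to a single component, is the modest prerequisite that makes the rotation close up.
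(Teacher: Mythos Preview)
Your proof is correct and follows essentially the same approach as the paper's. Both hinge on the identical ``crux'': using $f^p=\id_X$ to route an edge $x\to y$ around the cycle of $[x]_R$, obtaining $y\in R^p(f(x))$ and hence $f(y)\in R^{p+1}(f(x))=R(f(x))$. The only cosmetic differences are that the paper proves the converse implication directly (via $y\in R^{p-1}(f(y))\subset R^p(f(x))\subset R^{p+1}(x)=R(x)$) rather than by iterating the forward implication $p-1$ times, and handles $R^{i+1}\subseteq f\circ R^i$ for general $i$ in one shot rather than by induction from $i=1$; you are also slightly more explicit than the paper in justifying $f^p=\id_X$.
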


 \begin{proof}
 Observe that $x = f^p(x) \in R^{p-1}(f(x))$ and so if $y \in R(x)$, then $y \in R^p(f(x))$ and $f(y) \in R^{p+1}(f(x)) = R(f(x))$.
 Conversely, if $f(y) \in R(f(x))$, then $y \in R^{p-1}(f(y)) \subset R^p(f(x)) \subset R^{p+1}(x) = R(x)$.

 It immediately follows that $ f \circ R^i = R^i \circ f \subseteq R^{i+1}$.  If $y \in R^{i+1}(x)$, then $f^{p-1}(y) \in R^{p+i}(x) = R^i(x)$
 and so $y = f^p(y) \in f \circ R^i(x)$.

 \end{proof}

 Let $s:\poX \to X$ be a section, that is, a map such that $\pi\circ s =id_{\poX}$.  For $a\in\poX$,
 $s(a)$ is a point in $X$ such that $[s(a)]_R=a$, that is, $a$ is the strongly connected component
 containing $s(a)$, so a section is a choice function for the set $\poX$ of equivalence classes.
For $(a,b)\in\porder$, define
\begin{equation}\label{eq06}
\xi(a,b) = \{ i: f^i(s(b))\in R^{p}(s(a))\}.
\end{equation}

 \begin{thm} \label{thm:CanonIso}
Let $(X,R)$ be a relation in canonical form. If $p$, $\porder$, and $\xi$ are as above, then $\xi$ is an irreducible cocylcle
and $(X,R)$ is isomorphic to $(\Xcanonxi, \Rcanonxi)$.
 \end{thm}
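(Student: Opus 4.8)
The plan is to produce an explicit bijection $g\colon\Xcanonxi\to X$ that intertwines $\Rcanonxi$ and $R$, after first checking the two assertions about $\xi$. By Lemma~\ref{lem:new}, $f$ is an isomorphism of $(X,R)$ with $f\circ R^i=R^{i+1}$, and in particular $f^p=\id_X$ (each strongly connected component is a single $f$-orbit, of size dividing $p$), which makes $\xi(a,b)\subseteq\Z/p\Z$ well defined; it is nonempty because any $R$-path from $s(a)$ to $s(b)$ can be lengthened by powers of $f$ to one of length exactly $p$. For the cocycle identity \eqref{eq01d}, if $i\in\xi(a,b)$ and $j\in\xi(b,c)$ then $f^i(s(b))\in R^p(s(a))$ and $f^j(s(c))\in R^p(s(b))$; applying the isomorphism $f^i$ to the second membership and concatenating the two resulting length-$p$ paths shows $f^{i+j}(s(c))\in R^{2p}(s(a))=R^p(s(a))$, i.e. $i+j\in\xi(a,c)$.

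For irreducibility, the key observation is that an $R$-path from a recurrent point to itself cannot leave its strongly connected component, by antisymmetry of $\porder$; hence such a path is an $f$-orbit, so $x\in R^n(x)$ iff $f^n(x)=x$. Consequently $R^p(s(a))\cap[s(a)]_R=\{f^p(s(a))\}=\{s(a)\}$, and therefore $\xi(a,a)=\{\,i:f^i(s(a))=s(a)\,\}$ is precisely the subgroup of $\Z/p\Z$ of index equal to the period $\pi_a$ of the component $a$; thus $p_a=\pi_a$. The same observation identifies the quantity $p'(x)$ of Lemma~\ref{lem:recur1} with $\pi_{[x]_R}$, so that lemma gives $p=lcm\{\pi_a:a\in\poX\}=lcm\{p_a:a\in\poX\}$, which is one of the equivalent forms of irreducibility.

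For the isomorphism I would first work with the (surjective, generally non-injective) map $g'\colon\poX\times\Z/p\Z\to X$, $g'(a,t)=f^t(s(a))$ (well defined since $f^p=\id_X$), and prove $g'\circ\Rprodxi=R\circ g'$. Unwinding \eqref{eq01f}, $g'(\Rprodxi(a,t))=f^{t+1}(\bigcup_{b\in\porder(a)}\{f^k(s(b)):k\in\xi(a,b)\})$; since the $f$-orbit of $s(b)$ is all of the component $b$, the set $\{f^k(s(b)):k\in\xi(a,b)\}$ is exactly $[s(b)]_R\cap R^p(s(a))$ by the definition \eqref{eq06} of $\xi$, and $R^p(s(a))$ is contained in the union of the components in $\porder(a)$, so the inner union equals $R^p(s(a))$. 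Hence $g'(\Rprodxi(a,t))=f^{t+1}(R^p(s(a)))=f^t(R^{p+1}(s(a)))=f^t(R(s(a)))=R(f^t(s(a)))=R(g'(a,t))$, using $R^{p+1}=R$ and that $f$ commutes with $R$. Now $g'(a,t)=g'(a,t')$ exactly when $t-t'\in\xi(a,a)=\ker\rho_a$, so $g'$ factors as $g'=g\circ\rho$ through a unique bijection $g\colon\Xcanonxi\to X$; then $g\circ\Rcanonxi=g\circ\rho\circ\Rprodxi\circ\rho^{-1}=g'\circ\Rprodxi\circ\rho^{-1}=R\circ g'\circ\rho^{-1}=R\circ g$, so $h=g^{-1}$ is the desired isomorphism.

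I expect the only real difficulty to be organizational: keeping straight the two quotient maps ($\rho_a\colon\Z/p\Z\to\Z/p_a\Z$ and the passage from $\poX\times\Z/p\Z$ down to $\Xcanonxi$) and verifying that every set written using integer representatives is in fact representative-independent. Each of those independences reduces to the subgroup identities $\xi(a,a)+\xi(a,b)=\xi(a,b)=\xi(a,b)+\xi(b,b)$ of Lemma~\ref{lem:subgroup}, so the genuine content is concentrated in the two identities $\bigcup_{b\in\porder(a)}\{f^k(s(b)):k\in\xi(a,b)\}=R^p(s(a))$ and $f\circ R^p=R$.
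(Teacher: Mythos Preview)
Your proof is correct and follows essentially the same route as the paper: define the surjection $\bar m(a,t)=f^t(s(a))$ from $\poX\times\Z/p\Z$ to $X$ (your $g'$), verify $\bar m\circ\Rprodxi=R\circ\bar m$ via Lemma~\ref{lem:new} and $R^{p+1}=R$, factor through the quotient $\rho$ to obtain a bijection, and invoke Lemma~\ref{lem:recur1} for irreducibility. Your write-up is more explicit than the paper's in verifying the cocycle identity and in identifying $\xi(a,a)$ with the stabilizer of $s(a)$ under $f$, but the argument is the same.
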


 \begin{proof}
 It follows from Lemma \ref{lem:new} that $\xi$ is a cocycle.

Define the map
$\bar m:\poX \times \Z/p\Z \to X$ by $\bar m(a,t) = f^t(s(a))$, where
$s\colon\poX\to X$ is a section given by $\xi$.  Since $f$ has period $p$, $f^t(s(a))$
depends only on $t \mod p$ and so $f^t$, and hence $\bar m$, are well defined.  We show
that $\bar m \circ \Rprodxi = R \circ \bar m$, that is, $\bar m$ is a morphism from $(\poX \times \Z/p\Z, \Rprodxi)$ to $(X,R)$:
\begin{equation}\label{eq05} \begin{split}
(\bar m \circ \Rprodxi)(a,t) = \bar m(\{(b,r):b \in \porder(a), r-t-1\in\xi(a,b)\})\\
  = \{f^r(s(b)) :b \in \porder(a), r-t-1\in\xi(a,b)\} \hspace{2cm}\\
 = R(f^t(s(a))) = (R \circ \bar m)(a,t),\hspace{3cm}
\end{split}\end{equation}
because $f^{r-t-1}(s(b)) \in R^p(s(a))$ if and only if
$f^r(s(b)) \in R^p(f^{t+1}(s(a)) = R^{p+1}((f^t(s(a))) = R(f^t(s(a)))$ by Lemma \ref{lem:new} again.

By Lemma~\ref{lem:subgroup}, $\bar m$ factors
to define a morphism $m: \Xcanonxi \to X$.
The morphism $m$ is clearly surjective, and it is injective: if $\bar m(a,t) = \bar m(a',t')$,
then $a=a'$ and $f^t(s(a)) = f^{t'}(s(a))$, so that $t-t'\in\xi(a,a)$.  Thus $m$ is an isomorphism $(\Xcanonxi,\Rcanonxi) \to (X,R)$.

Because $p$ is the period of $R$, Lemma \ref{lem:recur1} implies that $p$ the lcm of the orders $p_a$ of the strongly connected components.
It follows that the cocycle is irreducible.
 \end{proof}

 However, the cocycle $\xi$ in Theorem~\ref{thm:CanonIso}  depends on the choice of section.

 \begin{ex} \label{ex:cohomSections}
 We refer back to Example~\ref{ex:cohomCocycles}.  If we choose the section $s$ such that
 $m^{-1}(s(a)) = (a,0)$ and $m^{-1}(s(b))=(b,0)$, then we get the cocycle $\xi$.  However,
 if we choose the section $s'$ such that $m^{-1}(s'(a)) = (a,0)$ and $m^{-1}(s'(b))=(b,1)$, then we get the cocycle $\xi'$.
 \end{ex}

 Note that although the two sections give two different cocycles, the resulting relations are isomorphic.
 Thus we need to find an invariant that does not depend on the choice of section.

 Let $\pi_1:(\Xcanonxi,\Rcanonxi) \to (\poX,\porder)$ be the projection.

 \begin{defn}
 If $\theta:\poX \to \Z/p\Z$ is an arbitrary map, then $\eta(a,b) = \{\theta(b)-\theta(a)\}$
 is a cocycle which we call a \emph{coboundary}.  We say that two cocycles are \emph{cohomologous} if they differ by a coboundary.
 \end{defn}

 Observe that if $\xi$ and $\xi'$ are cohomologous, then the subgroups $\xi(a,a) = \xi'(a,a)$ for all $a \in \poX$. Hence, $\xi$ is irreducible
 if and only if $\xi'$ is. That is, irreducibility is a property of the cohomology class.

\begin{thm} \label{thm:cohom}
For cocycles $\xi$ and $\xi'$, there exists an isomorphism $g:(\Xcanonxi,\Rcanonxi) \to (\Xcanon{\xi'},\Rcanon{\xi'})$
with $\pi_1\circ g = \pi_1$ if and only if $\xi$ and $\xi'$ are cohomologous.
\end{thm}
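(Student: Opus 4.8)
The plan is to prove both directions directly by unwinding the definitions of $\Rcanonxi$, $\Rcanon{\xi'}$, and of a coboundary. The key observation is that an isomorphism $g$ commuting with the projection $\pi_1$ must, for each $a \in \poX$ with index $p_a$, restrict to a bijection between the cyclic component $\{a\}\times\Z/p_a\Z$ and (assuming $\xi$ and $\xi'$ are cohomologous, hence $\xi(a,a)=\xi'(a,a)$, so the components have the same size) the component $\{a\}\times\Z/p_a\Z$ of $\Xcanon{\xi'}$; since $\Rcanonxi$ and $\Rcanon{\xi'}$ act as the cyclic shift $\tau_{p_a}$ there, such a bijection is a shift $t \mapsto t + c_a$ for some $c_a \in \Z/p_a\Z$. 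So the content of the theorem is precisely that one can make a coherent choice of shifts $\{c_a\}$ — equivalently, a map $\theta:\poX\to\Z/p\Z$ — that realizes the isomorphism, and this is possible exactly when the cocycles differ by the coboundary of $\theta$.

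For the ``if'' direction, suppose $\xi'(a,b) = \xi(a,b) + \{\theta(b)-\theta(a)\}$ for a map $\theta:\poX\to\Z/p\Z$. I would define $g:\Xcanonxi\to\Xcanon{\xi'}$ by $g(a,t) = (a,\, t - \theta(a) \bmod p_a)$, noting this is well defined (the subgroups $\xi(a,a)=\xi'(a,a)$ agree, so $p_a$ is the same on both sides, and reducing $\theta(a)$ mod $p_a$ is legitimate). It is a bijection with $\pi_1\circ g = \pi_1$. The verification that $g\circ\Rcanonxi = \Rcanon{\xi'}\circ g$ reduces, via the relations $\Rcanonxi = \rho\circ\Rprodxi\circ\rho^{-1}$ and the analogous formula for $\xi'$, to a direct computation with the defining condition $s - t - 1 \in \xi(a,b)$ versus $s - t - 1 \in \xi'(a,b)$: applying $g$ shifts the $b$-coordinate by $-\theta(b)$ and the $a$-coordinate by $-\theta(a)$, which exactly converts membership in $\xi(a,b)$ to membership in $\xi(a,b) + \{\theta(b)-\theta(a)\} = \xi'(a,b)$. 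This is routine once set up carefully at the level of $\Rprodxi$ and then pushed down through $\rho$ using Lemma~\ref{lem:subgroup}.

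For the ``only if'' direction, suppose such a $g$ exists. First, commuting with $\pi_1$ and with the dynamics forces $g$ to map $\{a\}\times\Z/p_a\Z$ bijectively onto the $a$-component of $\Xcanon{\xi'}$; in particular these components have equal cardinality, so $\xi(a,a)$ and $\xi'(a,a)$ have the same order, and (being subgroups of the cyclic group $\Z/p\Z$ of the same order) they coincide — so $p_a$ agrees on both sides. Since $g$ conjugates the cyclic shift $\tau_{p_a}$ to itself on this component, $g(a,t) = (a, t + c_a)$ for some $c_a\in\Z/p_a\Z$. Lift each $c_a$ to $\Z/p\Z$ and set $\theta(a) = -c_a$. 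It remains to check $\xi' = \xi + \{\theta(b)-\theta(a)\}$ for $(a,b)\in\porder$; this follows by applying the relation $g\circ\Rcanonxi = \Rcanon{\xi'}\circ g$ to points $(a,t)$ in the $a$-component, tracing where the $b$-component of the image goes, and reading off the constraint on $\xi'(a,b)$.

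The main obstacle I anticipate is bookkeeping the passage between $\Z/p\Z$ and $\Z/p_a\Z$: the cocycles $\xi,\xi'$ live in $\mathcal L_p$ while the canonical relations live on $\Xcanonxi$ whose $a$-component is $\Z/p_a\Z$, and one must check everywhere that reductions mod $p_a$ of objects defined mod $p$ are well defined — which is exactly guaranteed by Lemma~\ref{lem:subgroup} (so that $\xi(a,b)$ is a union of cosets of $\xi(a,a)$, equivalently its image in $\Z/p_a\Z$ determines it). A secondary subtlety is making sure the map $\theta$ produced in the ``only if'' direction is genuinely a function $\poX\to\Z/p\Z$ (any lift works, since different lifts differ by an element of $\xi(a,a)$ and hence give cohomologous, in fact equal-at-$(a,a)$, coboundaries), and that it indeed yields $\xi'$ on \emph{all} comparable pairs and not just, say, the diagonal. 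Both of these are handled by systematically working at the level of $\Rprodxi$ first and only then descending via $\rho$.
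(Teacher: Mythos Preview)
Your approach is essentially identical to the paper's: for the ``if'' direction the paper also defines the isomorphism as a coordinate shift by $\theta(a)$ on each component (with inverse given by $-\theta$), and for the ``only if'' direction it likewise reads off $\theta(a)$ from where $g$ sends $(a,0)$ and then verifies the cocycle identity pairwise. One small slip to fix when you write it out: with the convention $\xi'(a,b)=\xi(a,b)+\{\theta(b)-\theta(a)\}$, the map that conjugates $\Rcanonxi$ to $\Rcanon{\xi'}$ is $(a,t)\mapsto(a,t+\rho_a(\theta(a)))$, not $t-\theta(a)$; your sign is off, as a direct check of $g\circ\Rprodxi=\overline{R}_{\xi'}\circ g$ shows.
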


\begin{proof}
First, assume that they are cohomologous:  $\xi'(a,b) = \xi(a,b) + \{\theta(b)-\theta(a)\}$ for some $\theta$.
Then the map $m_\theta(a,t) := (a, t+\rho_a(\theta(a))$ is an isomorphism
$(\Xcanonxi,\Rcanonxi) \to (\Xcanon{\xi'},\Rcanon{\xi'})$, since $\Xcanonxi$ and
$\Xcanon{\xi'}$ are identical, $m_\theta\circ \Rcanonxi = \Rcanon{\xi'} \circ m_\theta$, and $m_\theta$ has inverse $m_{-\theta}$.

Next, assume that the isomorphism $g$ exists, and choose $\theta(a)$ so that
$g(a,0) = (a, \rho_a(\theta(a)))$.  Then $g(a,\rho_a(t)) = (a,\rho_a(t+\theta(a)))$
and $g(b,\rho_b(t+i))= (b,\rho_b(t+\theta(b)+i))$.
For $(a,b) \in \porder$,
\begin{align*}
((a,\rho_a(t)),(b,\rho_b(t+i+1))) \in  \Rcanonxi \ &\Longleftrightarrow \  i\in \ \xi(a,b) \\
 ((a,\rho_a(s)),(b,\rho_b(t+j+1))) \in \Rcanon{\xi'} \ &\Longleftrightarrow \ j+t-s\in \ \xi'(a,b).
\end{align*}

 Since $(g(a,\rho_a(t)), g(b,\rho_b(t+i+1)))$ is in $\Rcanon{\xi'}$ if and
only if \\ $((a,\rho_a(t)), (b,\rho_b(t+i+1)))$ is in $\Rcanonxi$, it follows that $i+\theta(b) - \theta(a) \in \xi'$.
if and only if $i\in\xi(a,b)$. Thus, $\xi$ and $\xi'$ are cohomologous.
\end{proof}

\begin{ex}
We refer again to Examples~\ref{ex:cohomCocycles} and \ref{ex:cohomSections}.  Observe that
if we set $\theta(a) =0$ and $\theta(b)=1$, then $\xi'(a,b) = \xi(a,b) + \{\theta(b)-\theta(a)\}$, that is, $\xi$ and $\xi'$ are cohomologous.
\end{ex}

We denote the cohomology class of $\xi$ by $[\xi]$.

\begin{thm} \label{thm:CompInv}
For a finite relation in canonical form $(X,R)$, the triple $(\porder, p, [\xi])$ provides a
complete invariant of shift equivalence (and thus of Szymczak isomorphism class), consisting
of the quotient partial order $(\poX,\porder)$, the  period $p>0$
and $[\xi]$, the irreducible cohomology class associated with any section $s:\poX \to X$.
\end{thm}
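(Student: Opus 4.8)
The plan is to deduce everything from the structural results already established. By Theorem~\ref{thm:CanonForm}, two relations in canonical form are shift equivalent exactly when they are isomorphic, so it suffices to prove that two canonical relations $(X,R)$ and $(X',R')$ are isomorphic if and only if their associated triples agree. One must first be precise about ``agree'', since $[\xi]$ is a cohomology class attached to a particular quotient poset: the triples agree when there is an order-isomorphism $\phi\colon(\poX,\porder)\to(\poX',R'_\ge)$ with $p=p'$ and $\phi$ carrying $[\xi]$ to $[\xi']$. With this understood, the claim that the triple is a complete invariant of shift equivalence is precisely this ``isomorphic if and only if triples agree'' equivalence.

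Before that I would check that the triple attached to a single canonical relation $(X,R)$ is independent of the section $s\colon\poX\to X$ used to define $\xi$ in~\eqref{eq06}. Given two sections $s_1,s_2$, Theorem~\ref{thm:CanonIso} supplies isomorphisms $m_i\colon(\Xcanon{\xi_i},\Rcanon{\xi_i})\to(X,R)$; since $m_i$ sends $(a,\rho_a(t))$ to $f^t(s_i(a))$, which lies in the strongly connected component $a$, each $m_i$ recovers the projection $\pi_1$ when followed by $\pi$. Hence $m_2^{-1}\circ m_1$ commutes with $\pi_1$, so Theorem~\ref{thm:cohom} gives $[\xi_1]=[\xi_2]$. (Alternatively, since $R$ cyclically permutes each strongly connected component, two sections are related by $s_2(a)=f^{\theta(a)}(s_1(a))$ for some $\theta\colon\poX\to\Z/p\Z$, and Lemma~\ref{lem:new} shows $\xi_2$ differs from $\xi_1$ by the coboundary of $\theta$.)

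For the ``only if'' direction, an isomorphism $h\colon(X,R)\to(X',R')$ carries strongly connected components to strongly connected components, inducing an order-isomorphism $\phi$, and transports $R^{p+1}=R$ to $(R')^{p+1}=R'$, so $p=p'$. Choosing $s'=h\circ s\circ\phi^{-1}$ as section for $(X',R')$ and using that $h$ intertwines the component bijections $f$ and $f'$ (because $hR=R'h$), one obtains $(f')^{i}(s'(b'))\in(R')^{p}(s'(a'))$ if and only if $f^{i}(s(b))\in R^{p}(s(a))$; hence $\xi'\circ(\phi\times\phi)=\xi$, so $\phi$ carries $[\xi]$ to $[\xi']$. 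For the ``if'' direction, Theorem~\ref{thm:CanonIso} gives $(X,R)\cong(\Xcanonxi,\Rcanonxi)$ and $(X',R')\cong(\Xcanon{\xi'},\Rcanon{\xi'})$ with $\xi,\xi'$ irreducible; transporting $\xi'$ along $\phi$ yields a cocycle $\psi$ for $\porder$ on $\poX\times\Z/p\Z$ (legitimate since $p=p'$) and an isomorphism $(\Xcanon{\xi'},\Rcanon{\xi'})\cong(\Xcanon{\psi},\Rcanon{\psi})$ induced by $\phi$. By hypothesis $\psi$ is cohomologous to $\xi$, so Theorem~\ref{thm:cohom} gives $(\Xcanonxi,\Rcanonxi)\cong(\Xcanon{\psi},\Rcanon{\psi})$. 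Composing the chain of isomorphisms gives $(X,R)\cong(X',R')$, and hence shift equivalence by Theorem~\ref{thm:CanonForm}.

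I expect the only delicate point to be the precise formulation of ``complete invariant'' — keeping straight that the cohomology class is defined relative to a chosen representative of the quotient poset, so that comparison of two triples is comparison up to an order-isomorphism — together with the related bookkeeping that each isomorphism $m_i$ from Theorem~\ref{thm:CanonIso} commutes with $\pi_1$, so that Theorem~\ref{thm:cohom} applies. Everything else is transport of structure along isomorphisms and is routine given Theorems~\ref{thm:CanonForm}, \ref{thm:CanonIso}, and~\ref{thm:cohom}.
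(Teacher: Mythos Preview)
Your proposal is correct and follows essentially the same route as the paper: reduce shift equivalence to isomorphism via Theorem~\ref{thm:CanonForm}, use Theorem~\ref{thm:CanonIso} to identify each canonical relation with its $(\Xcanonxi,\Rcanonxi)$, and invoke Theorem~\ref{thm:cohom} to translate between isomorphism and equality of cohomology classes. You are somewhat more explicit than the paper about well-definedness (independence of the section), about the order-isomorphism $\phi$ mediating the comparison of triples, and about verifying the $\pi_1\circ g=\pi_1$ hypothesis needed for Theorem~\ref{thm:cohom}, but these are elaborations of the same argument rather than a different approach.
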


\begin{proof}
By Theorem~\ref{thm:CanonForm},  two finite relations in canonical form are shift equivalent
if and only if they are isomorphic, and the  triple determines the isomorphism class by
Theorem~\ref{thm:CanonIso}.  To see that the triple is an invariant, let $h:(X,R)\to (X',R')$
be an isomorphism of finite relations in canonical form.  Since $y\in R(x)$ if
and only if $h(y)\in R'(h(x))$, $h$ preserves the partial order $(\poX,\porder)$ and
the  period $p$.  Choose sections $s:\poX \to X$ and $s':\poX \to X'$, which induce
cocycles $\xi$ and $\xi'$ respectively.  By Theorem~\ref{thm:CanonIso}, $(X,R)$ is isomorphic
to $(\Xcanonxi, \Rcanonxi)$ and $(X',R')$ is isomorphic to $(\Xcanon{\xi'}, \Rcanon{\xi'})$,
so $(\Xcanonxi, \Rcanonxi)$ is isomorphic to $(\Xcanon{\xi'}, \Rcanon{\xi'})$ and $[\xi]=[\xi']$ by Theorem~\ref{thm:cohom}.
\end{proof}

\begin{cor}
For any finite relation, the triple $(\porder, p, [\xi])$ provides a complete invariant of
shift equivalence, where the triple is associated to any relation in canonical form shift equivalent to the original.
\end{cor}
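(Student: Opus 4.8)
The plan is to deduce this from Theorem~\ref{thm:CanonForm} together with Theorem~\ref{thm:CompInv}, the only genuine point being to check that the triple is well defined independently of which canonical form one picks. First I would invoke Theorem~\ref{thm:CanonForm}: every finite relation $(X,R)$ is shift equivalent to some relation $(\tilde X,\tilde R)$ in canonical form, so at least one triple $(\porder,p,[\xi])$ is attached to $(X,R)$ via the construction of Section~\ref{sect:main}.

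Next I would establish well-definedness. Suppose $(\tilde X_1,\tilde R_1)$ and $(\tilde X_2,\tilde R_2)$ are both in canonical form and both shift equivalent to $(X,R)$. Since shift equivalence is an equivalence relation (equivalently, isomorphism in $\Szym(\RelF)$), these two relations are shift equivalent to each other; by Theorem~\ref{thm:CanonForm} they are therefore isomorphic, and by the invariance half of the proof of Theorem~\ref{thm:CompInv} an isomorphism of canonical relations preserves $\porder$, $p$, and $[\xi]$. Hence the triple attached to $(X,R)$ depends only on its shift equivalence class, not on the chosen canonical representative.

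Finally I would verify the two directions. For completeness: if finite relations $(X,R)$ and $(X',R')$ yield the same triple, choose canonical forms $(\tilde X,\tilde R)$ and $(\tilde X',\tilde R')$ shift equivalent to each; these carry the same triple, so by Theorem~\ref{thm:CompInv} they are shift equivalent, and transitivity forces $(X,R)$ and $(X',R')$ to be shift equivalent. For invariance: if $(X,R)$ and $(X',R')$ are shift equivalent, their canonical forms are shift equivalent to one another, hence (by the well-definedness step, or directly by Theorems~\ref{thm:CanonForm} and~\ref{thm:CompInv}) share the same triple. I do not anticipate any substantive obstacle: the argument is bookkeeping with the equivalence-relation properties of shift equivalence. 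The one place deserving an explicit sentence is the well-definedness claim, precisely because a canonical form is unique only up to isomorphism rather than on the nose.
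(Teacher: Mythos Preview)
Your proposal is correct and follows exactly the paper's approach: the paper's own proof is the single sentence ``This follows from Theorems~\ref{thm:CanonForm} and~\ref{thm:CompInv},'' and you have simply unpacked what that sentence means, including the well-definedness check that the paper leaves implicit.
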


\begin{proof}
This follows from  Theorems~\ref{thm:CanonForm} and \ref{thm:CompInv}.
\end{proof}

For a general relation $R$ on $X$ we obtain the invariant via the following steps:
\begin{enumerate}
\item Let  $i,j \in \nn$ such that $R^i = R^{i+j}$ is the first repeat in the sequence $\{R^n\}$. Let $q = nj$, the smallest multiple
of $j$ so that $nj \ge i$. So $q$ is an eventual period. In particular, $R^{q+q} = R^q$.

\item Let $p$ be the smallest element of $\nn$ such that $R^{q+p} = R^q$. So $p$ is the period and $p$ divides $q$.

\item Let $X_R = \{ x \in X : x \in R^q(x) \} = \{ x \in X : x \to_R x \}$. So $X_R$ is the set of recurrent points.

\item Let $\poX$ be the set of $\leftrightarrow_R$ equivalence classes of elements of $X_R$ and $\pi : X_R \to \poX$ be the
projection. So for $x \in X_R$, $\pi(x) = [x]_R$ is the equivalence class containing $x$. On $\poX$ the relation $\to_R$
induces the partial order $\porder$ on $\poX$.

\item Choose a section $\sigma : \poX \to X_R$ for the map $\pi$ so that for $a \in \poX$, $\sigma(a) \in X_R$ with $[\sigma(a)]_R = a$.

\item Define the irreducible cocycle $\xi:\porder \to \mathcal L_p$ by
\begin{equation}
\xi(a,b) = \{ i : [\sigma(b)]_R \cap R^{q+i}(\sigma(b)) \cap R^q(\sigma(a)) \not= \emptyset \}.\end{equation}
That is, $i \in \xi(a,b)$ when there exists $w \leftrightarrow_R \sigma(b)$ with \\ $w \in R^{q+i}(\sigma(b))\cap R^q(\sigma(a))$.

 Observe that if $s(a)$ is the $\equiv_R$ equivalence class of $\sigma(a)$ and $\hat R$ is the relation induced by $R^{q+1}$ on
 the set of $\equiv_R$ classes, then the above condition on $i$ says that $f^i(s(b)) \in \hat R^p(s(a))$. Compare equation (\ref{eq06}).

\item Thus, we have the triple consisting of the period $p$, the partial order $\porder$ on $\poX$, and the irreducible cocycle
$\xi:\porder \to \mathcal L_p$.  This triple is the invariant for $(X,R)$.
 \end{enumerate}%
 
 \subsection*{Acknowledgement}
 
 The research of MM was partially supported by Polish National Science
Center
under Opus Grant 2019/35/B/ST1/00874 and the research cooperation was
funded in part
by the program  Excellence Initiative – Research University at the
Jagiellonian University in Kraków.

The research of MP was partially supported by Polish National Science
Center
under Opus Grant 2019/35/B/ST1/00874.

\bibliographystyle{amsplain}

\end{document}